\documentclass[]{amsart}

\usepackage{amsmath}
\usepackage{amsthm}
\usepackage{amsfonts}
\usepackage{amssymb}
\usepackage[shortlabels]{enumitem}
\usepackage{float}
\usepackage{mathtools}
\usepackage{csquotes}
\usepackage{changes}
\usepackage{refcount}

\usepackage[style=alphabetic,sorting=nyt,backend=bibtex8, maxbibnames=99]{biblatex}
\bibliography{reference.bib}

\usepackage{hyperref}
\hypersetup{colorlinks=true, citecolor=blue,urlcolor=black, linkcolor=blue}
\usepackage{graphicx}

\makeatletter
\newcommand{\oast}{\mathbin{\mathpalette\make@circled\ast}}
\newcommand{\make@circled}[2]{%
  \ooalign{$\m@th#1\smallbigcirc{#1}$\cr\hidewidth$\m@th#1#2$\hidewidth\cr}%
}
\newcommand{\smallbigcirc}[1]{%
  \vcenter{\hbox{\scalebox{0.77778}{$\m@th#1\bigcirc$}}}%
}
\makeatother

\newtheorem{theorem}{Theorem}
\newtheorem{maintheorem}{Theorem}

\newtheorem*{maincorollary1}{Corollary B}
\newtheorem*{maincorollary2}{Corollary C}
\newtheorem*{theorem*}{Theorem}

\newtheorem{lemma}[theorem]{Lemma}
\newtheorem{claim}{Claim}[theorem]
\newtheorem{subclaim}{Claim}[claim]
\theoremstyle{definition}

\newtheorem{question}{Question}
\newtheorem{problem}[question]{Problem}
\newtheorem*{question*}{Question}
\newtheorem{definition}[theorem]{Definition}

\theoremstyle{remark}

\begin{document}

\title{Quagmires and large Suslin forests}

\author{Lorenzo Notaro}
\address{University of Vienna, Institute of Mathematics, Kurt G\"{o}del Research Center, Kolingasse 14-16, 1090 Vienna, Austria}
\curraddr{}
\email{lorenzo.notaro@univie.ac.at}

\begin{abstract}
In 1972, Jech asked whether there exists a Suslin $(\omega_1, \omega_2)$-forest in the constructible universe. As reported by Jech, Laver gave a positive answer, but his proof was never published and appears no longer to be available. In 2015, Eskew introduced the combinatorial principle $W^*_\kappa(\lambda)$, a strengthening of Silver's principle, and used it to construct a coherent Suslin $(\kappa, \lambda)$-forest. He then asked whether the principle $W^*_{\kappa^+}(\kappa^{++})$ holds in $\mathsf{L}$ for every regular cardinal $\kappa$. We give an affirmative answer to Eskew's question, thereby also settling Jech's question.
\end{abstract}

\thanks{This research was funded in whole or in part by the Austrian Science Fund (FWF) \href{https://www.fwf.ac.at/en/research-radar/10.55776/ESP1829225}{10.55776/ESP1829225}. For open access purposes, the author has applied a CC BY public copyright license to any author accepted manuscript version arising from this submission.}
\subjclass[2020]{Primary 03E05, Secondary 03E45, 06E10}
\keywords{Suslin forest, coherent forest, morass, quagmire, Silver's principle, diamond principle}
\maketitle

\section{Introduction}

A forest is a structure that generalizes a tree. This notion was first introduced by Jech in \cite{MR325397} under the name  ``mess". The term ``forest" was later adopted by C. Wei{\ss} in \cite{Weiss2010}. In this work, we only consider binary forests.

\begin{definition}
Given two cardinals $\kappa < \lambda$, a $(\kappa, \lambda)$-forest\footnote{In Eskew's more general notation, this is a $(\kappa, \lambda, 2)$-forest \cite{MR3326048}.} is a collection $F$ of functions satisfying:
\begin{enumerate}
\item for every $f \in F$, $\mathrm{ran}(f) \subseteq \{0,1\}$,
\item $\{\mathrm{dom}(f) : f \in F\} = [\lambda]^{< \kappa}$,
\item for every $x,y \in [\lambda]^{< \kappa}$ with $x \subseteq y$,
\[
\{f \in F : \mathrm{dom}(f) = x\} = \{f \upharpoonright x : f \in F \text{ and } \mathrm{dom}(f) = y\}.
\]
\end{enumerate}
\end{definition}
Note that if $F$ is a $(\kappa, \lambda)$-forest and $\vec{x} = \langle x_\alpha : \alpha < \kappa\rangle$ is an injective sequence of ordinals below $\lambda$, then  $F_{\vec{x}} = \{f \in F : \mathrm{dom}(f) = \{x_\alpha : \alpha < \beta\} \text{ for some } \beta < \kappa\}$, ordered by extension, is a tree of height $\kappa$. 

The Suslin property admits a natural generalization from trees to forests:

\begin{definition}
A $(\kappa, \lambda)$-forest is \emph{Suslin} if the following conditions hold:
\begin{enumerate}
\item for every $x \in [\lambda]^{< \kappa}$, $|\{f \in F : \mathrm{dom}(f) = x\}| < \kappa$,
\item for every $A \in [\lambda]^\kappa$ and $f : A \rightarrow 2$, there exists $x \in [A]^{< \kappa}$ such that $f \upharpoonright x \not\in F$,
\item the forest $F$ ordered by reverse inclusion is $\kappa$-cc.
\end{enumerate}
\end{definition}

Given a Suslin $(\kappa, \lambda)$-forest $F$, any tree of the form $F_{\vec{x}}$ as above is $\kappa$-Suslin. Moreover, the Boolean completion of a Suslin $(\kappa, \lambda)$-forest is a $\kappa$-Suslin algebra, that is, a complete Boolean algebra which is both $\kappa$-cc and $\kappa$-distributive. By a well-known result of Solovay \cite[Theorem 30.20]{MR1940513}, every $\kappa$-Suslin algebra has cardinality at most $2^\kappa$. Hence, a Suslin $(\kappa, \lambda)$-forest exists only if $\lambda \le 2^\kappa$.  

In \cite{MR325397}, Jech proved that Suslin $(\omega_1, \lambda)$-forests can be forced for every prescribed uncountable cardinal $\lambda$. He then asked the following question:

\begin{question*}[Jech]
Does $\mathsf{V = L}$ imply the existence of a Suslin $(\omega_1, \omega_2)$-forest?
\end{question*}

As reported by Jech  \cite{MR325397, MR506523, MR1940513}, Laver answered this question in the positive in an unpublished work. Unfortunately, Laver's proof appears to be no longer available \cite{MR3326048}. 

More recently, Eskew \cite{MR3326048} studied \emph{coherent} forests. A $(\kappa^+, \lambda)$-forest $F$ is coherent if for every $f,g \in F$, $|\{\alpha \in \mathrm{dom}(f) \cap \mathrm{dom}(g) : f(\alpha) \neq g(\alpha)\}| < \kappa$. While working on Jech's question, Eskew introduced the combinatorial principle $W^*_\kappa(\lambda)$ (see Section~\ref{sec:silver}), a strong form of Silver's principle $W_\kappa(\lambda)$ with built-in diamond, and proved:
\begin{theorem}[Eskew]
Given two cardinals $\kappa < \lambda$ with $\kappa$ regular, if $W^*_{\kappa^+}(\lambda)$ holds, then there exists a coherent Suslin $(\kappa^+, \lambda)$-forest closed under ${<}\kappa$-modifications.
\end{theorem}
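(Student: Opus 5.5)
The plan is to build $F$ level by level along $[\lambda]^{<\kappa^+}$, using the coherent ``skeleton'' provided by $W^*_{\kappa^+}(\lambda)$ to control the width and coherence of the levels, and its built-in diamond to seal antichains, in the style of the classical $\diamondsuit$-construction of a Suslin tree. I assume $W^*_{\kappa^+}(\lambda)$ has the following shape, and will adjust to the exact definition given in Section~\ref{sec:silver}:
\begin{itemize}
\item a system $\langle S_x : x\in[\lambda]^{\le\kappa}\rangle$ with $S_x\subseteq\mathcal P(x)$ and $|S_x|\le\kappa$;
\item coherence under restriction: $S_x=\{a\cap x : a\in S_y\}$ whenever $x\subseteq y$;
\item a Silver-type clause saying that no $A\subseteq\lambda$ with $|A|=\kappa^+$ has all its small traces $A\cap x$ in $\bigcup S_x$;
\item a diamond clause: for every $X\subseteq H(\lambda)$ (or every subset of a suitable coding of $[\lambda]^{\le\kappa}\times 2^{<\kappa^+}$), the set of $x$ with $X\cap(\text{code of }x)$ guessed by some $S_x$-element is stationary in $[\lambda]^{\le\kappa}$.
\end{itemize}
I also expect the principle to give, or to follow from, $\kappa^{<\kappa}=\kappa$. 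This is needed so that closing under ${<}\kappa$-modifications keeps $|F_x|\le\kappa$, where $F_x=\{f\in F:\mathrm{dom}(f)=x\}$.

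\textbf{Step 1 (definition and forest axioms).} I will fix a single coherent ``generic'' function and let
\[
F_x=\{h:x\to 2 \;:\; h \text{ differs from } \chi_a \text{ on fewer than } \kappa \text{ points, for some } a\in S_x\}.
\]
The elements $a\in S_x$ are chosen by a recursion on $x$ that uses the coherence of the system so that all members of $\bigcup_x S_x$ pairwise agree modulo sets of size ${<}\kappa$. Then:
\begin{itemize}
\item the restriction clause of a $(\kappa^+,\lambda)$-forest follows from the coherence of $\langle S_x\rangle$;
\item $F$ is closed under ${<}\kappa$-modifications and is coherent by construction;
\item condition (1) of Suslinity holds, since $|F_x|\le|S_x|\cdot\kappa^{<\kappa}=\kappa$.
\end{itemize}

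\textbf{Step 2 (no branches).} Given $A\in[\lambda]^{\kappa^+}$ and $f:A\to2$ with $f\upharpoonright x\in F$ for all $x\in[A]^{\le\kappa}$, coherence lets me modify $f$ on a small set so that $f=\chi_B$ with every trace $B\cap x$ in $\bigcup S_x$. This contradicts the Silver clause. The modification has to be done uniformly; I would use that the set of disagreements with a fixed reference branch is ${<}\kappa$ on each small $x$, and pressing down on a stationary set of $x$'s yields a single small modification.

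\textbf{Step 3 ($\kappa^+$-cc), which I expect to be the main obstacle.} The diamond component is built into the choice of $S_x$ at guessed $x$: whenever the guess at $x$ codes a maximal antichain $\mathcal A_x$ of $F\cap\bigcup_{y\subsetneq x}\mathcal P(y)^{\,y}$ (nodes with domain a proper subset of $x$), every $h\in F_x$ is arranged to extend some member of $\mathcal A_x$. Since $F_x$ is countable-type small ($\le\kappa$), this is an enumeration-and-extension argument along a cofinal $\omega$- or $\mathrm{cf}$-sequence of subsets of $x$, exactly as in the Suslin tree case.

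Now let $\mathcal A$ be a maximal antichain of $F$ and take a continuous chain of elementary submodels $M\prec H(\theta)$ with $\mathcal A\in M$, $|M|=\kappa$ and $M\cap\kappa^+\in\kappa^+$. The traces $x=M\cap\lambda$ form a club in $[\lambda]^{\le\kappa}$, and for these $\mathcal A\cap M$ is maximal below $x$. The diamond yields such an $x$ where $\mathcal A\cap M$ was guessed, so every node with domain $x$ lies above an element of $\mathcal A\cap M$. Every node with domain $\supseteq x$ then does too, and every node at all is compatible with such nodes. Hence $\mathcal A\subseteq M$, and $|\mathcal A|\le\kappa$.

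The delicate points are two:
\begin{itemize}
\item nodes in $F$ whose domains are not comparable with $x$ must also be handled, by passing to $\mathrm{dom}\cup x$;
\item the sealing at $x$ must be compatible with the restriction coherence imposed by larger $y\supseteq x$, which is precisely why the diamond is built into the coherent system rather than applied afterwards.
\end{itemize}
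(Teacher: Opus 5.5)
The paper does not reprove this theorem; it quotes it from Eskew. So the benchmark is the definition of $W^*_{\kappa^+}(\lambda)$ in Section~\ref{sec:silver}, and the gap is that your proposal never uses that definition.

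\textbf{The principle you assume is not $W^*$.} $W^*_{\kappa^+}(\lambda)$ is not a restriction-coherent system $\langle S_x : x\in[\lambda]^{\le\kappa}\rangle$ with a diamond on $[\lambda]^{\le\kappa}$. It consists of three things:
\begin{enumerate}
\item a $\kappa^+$-Kurepa tree $T$ with $|T_{\kappa^+}|=\lambda$;
\item families $W_\alpha\subseteq\mathcal P(T_\alpha)$ of size at most $\kappa$ that eventually capture every $X\in[T_{\kappa^+}]^{\le\kappa}$, i.e.\ $\pi_\beta[X]\in W_\beta$ with $\pi_\beta\upharpoonright X$ injective for all large $\beta$;
\item a sequence $\langle A_\alpha:\alpha<\kappa^+\rangle$ which, at stationarily many $\alpha$ in a stationary $S\subseteq\kappa^+$, guesses the level-$\alpha$ projections of a $\kappa^+$-sequence of pairs of small sets of branches.
\end{enumerate}
Silver's clause is a positive capturing statement. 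Nothing in $W^*$ forbids a set of size $\kappa^+$ from having all its traces in a prescribed family.

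Worse, what you assume is essentially the conclusion. The family $\{\chi_a : a\in S_x,\ x\in[\lambda]^{\le\kappa}\}$ is already a $(\kappa^+,\lambda)$-forest with levels of size at most $\kappa$, and your ``Silver clause'' essentially says it has no branches. Producing a restriction-coherent object indexed by all of $[\lambda]^{\le\kappa}$ with $\lambda\ge\kappa^{++}$ is exactly the difficulty the Kurepa tree and the capturing sets exist to overcome.

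\textbf{Steps 1 and 3 cannot be carried out as stated.} Since $F$ is coherent and closed under ${<}\kappa$-modifications, each $F_x$ is a single class of $2^x$ modulo $[x]^{<\kappa}$. So you must pick classes $[g_x]$ with $[g_y\upharpoonright x]=[g_x]$ for all $x\subseteq y$, and at guessed $x$ make every member of $[g_x]$ extend an element of $\mathcal A\cap M$.

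A ``recursion on $x$'' is not available. The relation $\subsetneq$ is not well-founded on $[\lambda]^{\le\kappa}$. Along an arbitrary well-ordering, the classes already fixed elsewhere constrain $[g_x]$ on many (e.g.\ almost disjoint) subsets of $x$. Such a coherent family need not be induced by any class on $x$, let alone one with the sealing property. You flag this as ``delicate'', but it is the whole theorem.

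In outline, the principle is built to replace this by a recursion of length $\kappa^+$ along the levels of $T$:
\begin{enumerate}
\item Identify $\lambda$ with $T_{\kappa^+}$.
\item At stage $\alpha$, choose functions on the members of $W_\alpha$ that are compatible, via the maps $\pi_{\beta\alpha}$, with earlier stages on the sets captured below $\alpha$.
\item Put $f\in F$ iff $f\circ(\pi_\beta\upharpoonright\mathrm{dom}(f))^{-1}$ lies in the stage-$\beta$ family for all large $\beta$.
\end{enumerate}
Sealing happens at those $\alpha\in S$ where $A_\alpha$ correctly guesses an enumerated antichain, each node $g$ coded as $(g^{-1}(0),g^{-1}(1))$. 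Clause (4a) makes the union of the domains of the first $\alpha$ nodes project injectively onto a member of $W_\alpha$. Clause (4b), together with (3), ensures that only bounded pieces of that set are already fixed from below. That leaves the freedom needed to make every level-$\alpha$ function on it extend the projection of a guessed node.

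Your closing argument (each $c\in\mathcal A$ is compatible with a member of $\mathcal A\cap M$, hence $c\in M$) is the right last step, but only once this sealing has actually been constructed.

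\textbf{Step 2 is superfluous.} Suppose $f:A\to 2$ were a branch with $A=\{a_\xi:\xi<\kappa^+\}$. Flipping $f\upharpoonright\{a_\eta:\eta\le\xi\}$ at $a_\xi$ gives $\kappa^+$ pairwise incompatible nodes of $F$. So the no-branch clause follows from $\kappa^+$-cc and closure under ${<}\kappa$-modifications, and no separate pressing-down argument is needed.
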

Here, a forest $F$ is \emph{closed under ${<}\kappa$-modifications} if whenever $f \in F$ and $g: \mathrm{dom}(f) \rightarrow 2$ differs from $f$ on fewer than $\kappa$ coordinates, then $g \in F$. 

Relative to a Mahlo cardinal, Eskew proved that $W^*_{\kappa^+}(\lambda)$ is consistent for arbitrarily large $\lambda$. He then asked whether $W^*_{\kappa^+}(\kappa^{++})$ holds under $\mathsf{V= L}$ for every regular cardinal $\kappa$. In this work, we answer Eskew's question in the positive.

\begin{maintheorem}\label{thm:main}
For every infinite cardinal $\kappa$, if $\kappa^{< \kappa} = \kappa$, $\diamondsuit(E^{\kappa^+}_\kappa)$ holds and there is a $(\kappa^+, 1)$-morass, then  $W^*_{\kappa^+}(\kappa^{++})$ holds.
\end{maintheorem}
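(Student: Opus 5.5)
The plan is to build the witnessing family for $W^*_{\kappa^+}(\kappa^{++})$ directly from the morass, and to graft the diamond guesses onto it by transporting a single $\diamondsuit(E^{\kappa^+}_\kappa)$-sequence along the morass embeddings. I use Velleman's simplified form of a $(\kappa^+,1)$-morass. This is a sequence $\langle \theta_\alpha : \alpha \le \kappa^+\rangle$ with $\theta_{\kappa^+} = \kappa^{++}$, together with sets $\mathcal F_{\alpha\beta}$ of order-preserving maps $\theta_\alpha \to \theta_\beta$. These satisfy $|\mathcal F_{\alpha\beta}| \le \kappa$ for $\beta < \kappa^+$, composition/coherence, the amalgamation property at successors, directedness at limits, and continuity at limits. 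The candidate family is
\[
\mathcal W = \{ f[\gamma] : \alpha < \kappa^+,\ \gamma \le \theta_\alpha,\ f \in \mathcal F_{\alpha\,\kappa^+}\} \subseteq [\kappa^{++}]^{\le\kappa}.
\]
I expect to refine this to initial segments of the form $f[\theta_\alpha]$, possibly intersected with an ordinal, to get closure under intersections.

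The first step is to verify the Silver-type (``quagmire'') clauses from the morass axioms. Cofinality of $\mathcal W$ in $[\kappa^{++}]^{\le\kappa}$ follows because $\kappa^{++}$ is the direct limit of the system. The key local-smallness clause asks that each $x \in \mathcal W$ have at most $\kappa$ members of $\mathcal W$ below it. Writing $x = f[\theta_\alpha]$, every $y \subseteq x$ in $\mathcal W$ factors through $f$ as $f\circ g[\gamma]$ with $g \in \mathcal F_{\beta\alpha}$, $\beta<\alpha$. There are only $\kappa\cdot\kappa^+$-many such $g$ in total, but coherence plus $|\mathcal F_{\beta\alpha}|\le\kappa$ bounds the count by $\kappa$. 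Closure under intersections (or the weaker version Eskew needs) comes from the amalgamation axiom: two images $f_1[\theta_\alpha]$ and $f_2[\theta_\alpha]$ meet in an initial segment that is again an image. The hypothesis $\kappa^{<\kappa}=\kappa$ is used to ensure each level is small enough to enumerate subsets of size $<\kappa$.

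The second step adds the diamond. Fix a $\diamondsuit(E^{\kappa^+}_\kappa)$-sequence $\langle S_\delta : \delta\in E^{\kappa^+}_\kappa\rangle$. Using $\kappa^{<\kappa}=\kappa$ and a fixed bijection, I let $S_\delta$ code a pair consisting of an ordinal $\alpha_\delta$ with $\theta_{\alpha_\delta}$ roughly $\delta$, and a subset $B_\delta\subseteq\theta_{\alpha_\delta}$. For $x = f[\theta_\alpha]\in\mathcal W$ I set $A_x = f[B_\alpha]$. This is well defined because $f$ is the unique order-preserving map onto $x$ from its order type.

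The main obstacle is the guessing clause: for every $A\subseteq\kappa^{++}$ (and, in Eskew's formulation, every structure or club on $[\kappa^{++}]^{\le\kappa}$), there are stationarily many $x\in\mathcal W$ with $A\cap x = A_x$. I would argue as follows. Pull $A$ back along the top maps to obtain, for each $\alpha$ and $f\in\mathcal F_{\alpha\kappa^+}$, the set $f^{-1}[A]\subseteq\theta_\alpha$. Build a continuous chain $\langle(\alpha_\nu,f_\nu):\nu<\kappa^+\rangle$ whose images exhaust $\kappa^{++}$, using continuity of the morass at limits. On the club of $\delta$ where the chain is continuous, the pulled-back set at $\delta$ is just the $\delta$-th approximation of a single subset of $\kappa^+$ coding $A$ along the chain. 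The diamond sequence then guesses this set on a stationary subset of $E^{\kappa^+}_\kappa$. The delicate point is that cofinality-$\kappa$ limits are exactly where morass continuity gives $f_\delta[\theta_{\alpha_\delta}] = \bigcup_{\nu<\delta} f_\nu[\theta_{\alpha_\nu}]$, so the guessed $x=f_\delta[\theta_{\alpha_\delta}]$ lies in $\mathcal W$ and satisfies $A_x = A\cap x$. Finally, I would transfer stationarity in $\kappa^+$ to stationarity in $[\kappa^{++}]^{\le\kappa}$ by showing that any club there contains the images of a club of stages of such a chain.
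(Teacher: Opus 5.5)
Your proposal has a genuine gap, and the first problem is that it targets the wrong principle. As defined in the paper, $W^*_{\kappa^+}(\kappa^{++})$ is not a diamond on a Kurepa family $\mathcal W\subseteq[\kappa^{++}]^{\le\kappa}$ with guesses $A_x=A\cap x$. It requires the following:
\begin{itemize}
\item a $\kappa^+$-Kurepa tree $T$ with $|T_{\kappa^+}|=\kappa^{++}$;
\item level algebras $W_\alpha\subseteq\mathcal P(T_\alpha)$ of size at most $\kappa$ that are $\kappa$-complete and contain all singletons;
\item a stationary $S$ such that (3) for $\alpha\in S$, the members of $W_\alpha$ captured below $\alpha$ are closed under ${<}\kappa$-unions and under subsets lying in $W_\alpha$;
\item (4) for every $f$ whose range $Y=\{b_\mu:\mu<\kappa^+\}$ has size $\kappa^+$, stationarily many $\alpha\in S$ at which (a) $\{b_\mu:\mu<\alpha\}$ is captured at $\alpha$, (b) every captured-below subset of $\{\pi_\alpha(b_\mu):\mu<\alpha\}$ is bounded in the enumeration, and (c) $A_\alpha$ is the set of projected pairs.
\end{itemize}
Your first step corresponds at most to the comparatively easy Silver part, clause (1) (Claim~\ref{claim:1}). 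Nothing in your plan addresses (3) or (4b), and these are the heart of the theorem because they pull against (4a). Clause (4a) forces the trace $\pi_\alpha[Y\cap N]$ into $W_\alpha$, where it generally must be added as a new generator. One must then show that this creates no new captured-below sets and preserves the ideal structure. At the same time, (4b) requires the trace and all its unbounded pieces in $W_\alpha$ to remain uncaptured.

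The paper handles this in four moves:
\begin{itemize}
\item it converts the morass into a $(\kappa^+,\mathrm{Lim}(\kappa^+))$-quagmire (Theorem~\ref{thm:quagmire});
\item it decodes guessed branches from the diamond through their sets of predecessors, with uniqueness coming from (Q4) at levels in $S$;
\item it adds $X^\alpha_0$ only when $(\dagger)$ certifies that it equals $\pi_\alpha[Y\cap N]$ for some $Y\in[T_{\kappa^+}]^{\kappa^+}\cap N$;
\item it proves that the captured-below members of $W_\alpha$ are exactly the lifts in $V_\alpha$, and that $W_\alpha\setminus V_\alpha$ is upward closed (Claims~\ref{claim:5}--\ref{claim:8}).
\end{itemize}
Closure of $\mathcal W$ under intersections via amalgamation is no substitute for this.

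Second, your guessing step fails as stated. No continuous chain $\langle(\alpha_\nu,f_\nu):\nu<\kappa^+\rangle$ can have images exhausting $\kappa^{++}$. Each $f_\nu[\theta_{\alpha_\nu}]$ has size at most $\kappa$, so the union $Z$ has size at most $\kappa^+$. Consequently, one $\diamondsuit(E^{\kappa^+}_\kappa)$-sequence transported along one chain cannot guess $A\cap x$ for arbitrary $A\subseteq\kappa^{++}$.

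The final stationarity transfer also fails. A club in $[\kappa^{++}]^{\le\kappa}$ need not contain any subset of $Z$: take the nonempty sets closed under some $F\colon\kappa^{++}\to\kappa^{++}\setminus Z$.

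A $\kappa^+$-diamond suffices for the actual principle because clause (4) concerns only one fixed $\kappa^+$-sized set $Y$ of branches, together with its enumeration and $f$. The paper codes this data as a single $\Omega\subseteq H(\kappa^+)$, recording the predecessors of each $b_\mu$ and the relation $b_\mu\in f_i(\eta)$. It then guesses $\Omega$ via elementary submodels $N$ with ${}^{<\kappa}N\subseteq N$ and $N\cap\kappa^+\in S$ (Lemma~\ref{lemma:diamond}).
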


In fact, the full combinatorial power of a morass is not needed. The proof uses only a fragment of it, essentially the structure isolated by Burgess under the name of a \emph{quagmire} \cite{MR555420}---see Definition~\ref{def:quagmire}. A quagmire naturally arises from the tree structure inherent in a simplified morass (Theorem~\ref{thm:quagmire}). We formulate the result in this greater generality in Section~\ref{sec:main}. 

By combining our result with Eskew's construction, we give a strong positive answer to Jech's question.

\begin{maincorollary1}
If $\mathsf{V = L}$, then for every infinite regular cardinal $\kappa$ there exists a coherent Suslin $(\kappa^+, \kappa^{++})$-forest closed under ${<}\kappa$-modifications.
\end{maincorollary1}

Theorem~\ref{thm:main} also has the following consequence for Suslin algebras. Given a coherent $(\kappa^+, \lambda)$-forest which is closed under ${<}\kappa$-modifications, it is not hard to see that its Boolean completion is homogeneous (cf. the Solovay–Koppelberg theorem \cite[Theorem 18.4.1]{MR991595} for a more general result). Recall that a Boolean algebra $B$ is homogeneous if $B \upharpoonright a \cong B$ for every $a \in B^+$. This yields the following corollary.
\begin{maincorollary2}
If $\mathsf{V = L}$, then for every infinite regular cardinal $\kappa$ there exists a homogeneous $\kappa^+$-Suslin algebra of cardinality $\kappa^{++}$.
\end{maincorollary2}
For $\kappa = \omega$, the existence of a homogeneous (and chain-homogeneous) Suslin algebra of cardinality $\aleph_2$ that has no complete generating set of cardinality $\aleph_1$ was previously established by Scharfenberger-Fabian from $\diamondsuit^+$ via a very different construction \cite{MR2853724}.

The paper is organized as follows. In Section~\ref{sec:prel}, we introduce the preliminary notions and discuss both Silver's and Eskew's combinatorial principles.  In Section~\ref{sec:quagmire}, we introduce a slight generalization of Burgess’s notion of a quagmire and show how it arises by retaining essentially only the tree structure underlying a simplified morass. In Section~\ref{sec:main}, we prove the more general version of Theorem~\ref{thm:main}, formulated in terms of quagmires (Theorem~\ref{thm:mains}). Finally, in Section~\ref{sec:questions}, we record some open questions.

\section{Preliminaries}\label{sec:prel}

\subsection{Notation}
The monograph \cite{MR1940513} is our reference for all classical definitions and notation in set theory.

Given two cardinals $\kappa < \lambda$ with $\kappa$ regular, $E^\lambda_\kappa$ denotes the set of all ordinals below $\lambda$ of cofinality $\kappa$. Given a set $X$ and a cardinal $\kappa$, we denote by $[X]^{<\kappa}$ and $[X]^\kappa$ the families of all subsets of $X$ of cardinality $< \kappa$ and $\kappa$, respectively. The set $[X]^{\le \kappa}$ is just $[X]^{<\kappa} \cup [X]^\kappa$.

A \emph{tree} $(T, \le)$ is a poset such that, for each $x \in T$, the set $\{y \in T \mid y < x\}$ is well-ordered by $\le$. If $x \in T$, the \emph{height} of $x$ in $T$, denoted by $\mathrm{ht}(x)$, is the order type of $\{y \in T \mid y < x\}$. The height of a tree $T$ is defined as $\mathrm{ht}(T) = \sup \{\mathrm{ht}(x)+1 : x \in T\}$. For each ordinal $\alpha < \mathrm{ht}(T)$, the $\alpha$-th \emph{level} of $T$, denoted by $T_\alpha$, is the set of all nodes of $T$ of height $\alpha$. Moreover, we denote the subtree $\bigcup_{\beta < \alpha} T_\beta$ by $T \upharpoonright \alpha$.

Given $\alpha < \beta < \mathrm{ht}(T)$, the map $\pi_{\alpha \beta} : T_\beta \rightarrow T_\alpha$ sends each node of $T_\beta$ to its unique predecessor of height $\alpha$. The map $\pi_\alpha$ is the union of the maps $\pi_{\alpha \beta}$ for $\alpha < \beta < \mathrm{ht}(T)$.

For an uncountable cardinal $\kappa$, a $\kappa$-tree is a tree of height $\kappa$ whose levels have cardinality $< \kappa$. A $\kappa$-Kurepa tree is a $\kappa$-tree that has at least $\kappa^+$ cofinal branches. In this paper, it will be notationally convenient to identify the cofinal branches of a $\kappa$-Kurepa tree with an additional top level. More precisely, we regard a $\kappa$-Kurepa tree as a tree $T$ of height $\kappa+1$ such that $T \upharpoonright \kappa$ is a $\kappa$-tree, $|T_\kappa| > \kappa$, and for any two distinct $x,y \in T_\kappa$, there exists $\alpha < \kappa$ such that $\pi_\alpha(x) \neq \pi_\alpha(y)$.

\subsection{Silver's and Eskew's principles}\label{sec:silver}
Silver's principle $W$ is a morass-like combinatorial principle isolated by Silver. It asserts the existence of a Kurepa tree augmented by a combinatorial structure used to ``capture" small subsets of the top level at lower levels of the tree. For a detailed introduction to Silver's principle, we refer the reader to \cite{MR673791, MR823780}.

\begin{definition}
Silver's principle $W_\kappa(\lambda)$ asserts that there exist a $\kappa$-Kurepa tree $T$ with $|T_\kappa| = \lambda$ and a sequence $\langle W_\alpha : \alpha < \kappa\rangle$ such that:
\begin{enumerate}
\item for every $\alpha < \kappa$, $W_\alpha$ is a subset of $\mathcal{P}(T_\alpha)$ of cardinality  $<\kappa$,
\item for every $X \in [T_{\kappa}]^{<\kappa}$, there exists $\alpha < \kappa$ such that $\pi_\beta[X] \in W_\beta$ for every $\beta$ with $\alpha \le \beta < \kappa$.
\end{enumerate}
\end{definition}

As reported by Jech in \cite{MR506523}, Laver proved that the existence of a Suslin $(\omega_1, \omega_2)$-forest follows from $W_{\omega_1}(\omega_2) + \diamondsuit$. However, as mentioned in the introduction, the proof appears to have been lost. We return to this in Section~\ref{sec:questions}.

Before stating Eskew's strengthening of Silver's principle, it is useful to introduce the following terminology. Let $T$ and $\langle W_\alpha : \alpha < \kappa\rangle$ witness $W_\kappa(\lambda)$. Given $\alpha < \gamma \le \kappa$ and a set $A \subseteq T_\gamma$, we say that $A$ is \emph{captured at $\alpha$} if, for every $\beta$ with $\alpha \le \beta < \gamma$, $\pi_\beta[A] \in W_\beta$ and $\pi_\beta \upharpoonright A$ is injective. We say that $A$ is \emph{captured below $\gamma$} if it is captured at some $\alpha<\gamma$.

Eskew introduced the following strengthening of Silver's principle in order to construct a coherent Suslin $(\kappa^+, \lambda)$-forest.

\begin{definition}
Given two infinite cardinals $\kappa< \lambda$, where $\kappa = \theta^+$ for some regular cardinal $\theta$, the principle $W^*_{\kappa}(\lambda)$ asserts that there exist a $\kappa$-Kurepa tree $(T, <)$ with $|T_{\kappa}| = \lambda$, a sequence $\langle W_\alpha : \alpha < \kappa\rangle$, a stationary set $S \subseteq \kappa$, and a sequence $\langle A_\alpha : \alpha < \kappa\rangle$ with $A_\alpha \subseteq W_\alpha^2$, such that:
\begin{enumerate}
\item $(T, <)$ and $\langle W_\alpha : \alpha < \kappa\rangle$ witness $W_{\kappa}(\lambda)$,
\item for every $\alpha < \kappa$, $W_\alpha$ contains $\{x\}$ for every $x \in T_\alpha$ and it is a $\theta$-complete Boolean subalgebra of $\mathcal{P}(T_\alpha)$,
\item for every $\alpha \in S$, the set $\{X \in W_\alpha : X \text{ is captured below } \alpha\}$ is closed under ${<}\theta$-unions and under taking subsets that belong to $W_\alpha$,
\item if $f:\kappa \rightarrow ([T_{\kappa}]^{\le \theta})^2$ is such that $\bigcup_{\alpha < \kappa} (f_0(\alpha) \cup f_1(\alpha))$ has cardinality $\kappa$ and $\langle b_\mu : \mu < \kappa\rangle$ enumerates it, then the set of all $\alpha \in S$ satisfying the following conditions is stationary:
\begin{enumerate}
\item $\{b_\mu : \mu < \alpha\}$ is captured at $\alpha$,
\item if $X \subseteq \{\pi_\alpha(b_\mu) : \mu < \alpha\}$ is captured below $\alpha$, then \[\sup \{\mu < \alpha: \pi_{\alpha}(b_\mu) \in X\} < \alpha,\]
\item $\{\langle \pi_\alpha[f_0(\eta)], \pi_\alpha[f_1(\eta)]\rangle : \eta < \alpha\} = A_\alpha$.
\end{enumerate} 
\end{enumerate}
\end{definition}

So Eskew's $W^*$ principle is a strengthening of Silver's principle $W$,  augmented by a diamond-like sequence $\langle A_\alpha : \alpha < \kappa\rangle$ used to guess, stationarily often, the projections of $\kappa$-sequences of pairs of elements of $[T_\kappa]^{< \kappa}$. Eskew proved the following consistency result \cite{MR3326048}.
\begin{theorem}[Eskew]
Suppose that $\kappa$ is a Mahlo cardinal and $\mu < \kappa$ is regular. If $G$ is $V$-generic for $\mathrm{Col}(\mu, {<}\kappa) * \mathrm{Add}(\kappa, 1)$, then $V[G]$ satisfies $W^*_\kappa(2^\kappa)$.
\end{theorem}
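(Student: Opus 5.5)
The plan is to compute everything in $V[G]$ with $G = G_\kappa * H$, where $G_\kappa$ is $\mathrm{Col}(\mu,{<}\kappa)$-generic and $H$ is $\mathrm{Add}(\kappa,1)$-generic over $V[G_\kappa]$. For $\alpha<\kappa$ write $G_\alpha = G_\kappa \cap \mathrm{Col}(\mu,{<}\alpha)$. The setup is as follows.

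\begin{itemize}
\item In $V[G]$ we have $\kappa = \mu^+$, so $\theta = \mu$.
\item Since $\kappa$ is inaccessible, $\mathrm{Col}(\mu,{<}\kappa)$ is $\kappa$-cc. Every bounded subset of $\kappa$, and every set of size ${<}\kappa$ of such, appears in some $V[G_\alpha]$ with $\alpha<\kappa$.
\item $\mathrm{Add}(\kappa,1)$ is ${<}\kappa$-closed in $V[G_\kappa]$, so it adds no new ${<}\kappa$-sequences. It does, however, add $2^\kappa$ new subsets of $\kappa$ by coding.
\item Let $S$ be the set of $V$-inaccessible cardinals below $\kappa$. It is stationary in $V$ because $\kappa$ is Mahlo, and it stays stationary in $V[G]$ by $\kappa$-cc and ${<}\kappa$-closure.
\end{itemize}

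\textbf{The tree.} For $\alpha<\kappa$, let $T_\alpha$ be the set of functions $x:\alpha\to 2$ with $x \in V[G_{\alpha+1}]$, with a fixed adjustment at limits so that each $T_\alpha$ is a genuine level. Order $T$ by end-extension. Then $|T_\alpha| \le |2^\alpha|^{V[G_{\alpha+1}]} < \kappa$. Every $b:\kappa\to 2$ in $V[G]$ has all of its initial segments in $T$, so $T_\kappa$ can be taken to consist of all such $b$, and $|T_\kappa| = 2^\kappa$.

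\textbf{The algebras $W_\alpha$.} Let $W_\alpha = \mathcal{P}(T_\alpha)\cap V[G_{\alpha+2}]$ (or a similar later stage). This is a $\mu$-complete Boolean algebra containing all singletons, and it has size ${<}\kappa$. Silver's condition then follows from the bullet points above. Given $X \in [T_\kappa]^{\le\mu}$, first pick $\beta<\kappa$ beyond all splitting points of $X$, so that $\pi_\gamma \upharpoonright X$ is injective for $\gamma \ge \beta$. The sequence $\langle \pi_\gamma[X] : \gamma<\kappa\rangle$ is definable from $X$. By ${<}\kappa$-closure each $\pi_\gamma[X]$ lies in $V[G_\kappa]$, hence in some $V[G_\delta]$, and a closure argument gives $\pi_\gamma[X] \in V[G_{\gamma+2}]$ on a tail of $\gamma$. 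If this is delicate, I would instead enlarge $W_\alpha$ to $\mathcal{P}(T_\alpha)\cap V[G_{\alpha'}]$, where $\alpha'$ is the next element of a fixed club of closure points.

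\textbf{Clause 3.} Take $\alpha \in S$. A set $X \in W_\alpha$ that is captured below $\alpha$ has its capturing witnessed inside some $V[G_\beta]$ with $\beta<\alpha$, using that $\alpha$ is inaccessible in $V$. Closure under ${<}\mu$-unions follows by taking the supremum of fewer than $\mu<\alpha$ such $\beta$'s. Closure under subsets that lie in $W_\alpha$ is immediate from the definition of capture.

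\textbf{Clause 4(b)} is similar. If $X \subseteq \{\pi_\alpha(b_\nu):\nu<\alpha\}$ is captured at some $\beta<\alpha$, then $X$ has size ${<}\kappa$ in a model where $\alpha$ is still inaccessible. So $X$ is bounded in the enumeration, provided $\alpha$ is also a closure point of the enumeration.

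\textbf{Clause 4(a)} holds on a club. For $\alpha$ in a club of closure points of a name for $\langle b_\nu:\nu<\kappa\rangle$, the set $\{\pi_\alpha(b_\nu):\nu<\alpha\}$ is computed in $V[G_\alpha]$ and stays injectively projected at all later levels.

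\textbf{The guessing sequence $\langle A_\alpha : \alpha<\kappa\rangle$} is read off from $H$. Let $A_\alpha$ be the $\alpha$-th block of the Cohen subset, decoded via a fixed bookkeeping of the elements of $V[G_{\alpha+2}]$, which has size ${<}\kappa$ at the relevant stage.

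\textbf{Main step: clause 4(c).} We need that for any $f$ as in clause 4, the set of $\alpha\in S$ satisfying (a), (b) and $A_\alpha = \{\langle\pi_\alpha[f_0(\eta)],\pi_\alpha[f_1(\eta)]\rangle : \eta<\alpha\}$ is stationary. I would prove this by a density argument over $V[G_\kappa]$.
\begin{enumerate}
\item Fix $\mathrm{Add}(\kappa,1)$-names $\dot f$ and $\dot C$, the latter for a club, and a condition $p$.
\item Using ${<}\kappa$-closure, build a descending sequence of conditions $p_i$, $i<\kappa$, deciding more and more of $\dot f$ and $\dot C$ in $V[G_\kappa]$.
\item In $V$, use Mahloness to find an inaccessible $\alpha$ that is a closure point of this construction. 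At $\alpha$, the decided values give $\alpha\in C$, and the decided portion of $f\upharpoonright\alpha$ projects to level $\alpha$ inside $V[G_{\alpha+2}]$.
\item Let $q$ be the lower bound of the $p_i$ for $i<\alpha$, and extend it to code exactly this set of projected pairs into the $\alpha$-th block, choosing the blocks so that this part of $H$ has not yet been decided.
\end{enumerate}
The extension forces $\alpha\in C\cap S$ together with 4(a)--(c). I expect the main obstacle to be this step, in particular making the $V$-Mahlo reflection interact correctly with names over $V[G_\kappa]$. The first approach to try is to take the elementary-submodel closure points $M\prec H(\kappa^+)$ with $M\cap\kappa = \alpha$ inaccessible; this handles 4(a), 4(b) and 4(c) simultaneously, since $\mathrm{Col}(\mu,{<}\kappa)$ restricted to $M$ is exactly $\mathrm{Col}(\mu,{<}\alpha)$.
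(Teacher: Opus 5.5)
The paper only quotes this theorem from Eskew, so there is no proof to compare against; judged on its own, your proposal has a genuine gap, located in the offsets $T_\alpha\subseteq V[G_{\alpha+1}]$ and $W_\alpha=\mathcal P(T_\alpha)\cap V[G_{\alpha+2}]$.

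\textbf{Why clauses (3) and (4b) fail.} For every $\alpha\in S$, the model $V[G_{\alpha+2}]$ already contains the generic collapse of $\alpha$ to $\mu$. Hence it contains a set $A\subseteq\mu$ with $A\notin V[G_\alpha]$, and an increasing sequence $\langle\xi_j:j<\mu\rangle$ cofinal in $\alpha$. Your arguments for clauses (3) and (4b) silently assume the relevant $X$ lies in $V[G_\alpha]$, where $\alpha$ is still regular and $\mathrm{Col}(\mu,{<}\alpha)$ is $\alpha$-cc. With your $W_\alpha$, both clauses are in fact false.
\begin{enumerate}
\item For (3), let $c_i$ ($i<\mu$) be ground-model branches already separated at level $\mu$. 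Then $X=\{c_i\upharpoonright\alpha:i<\mu\}\in V$ is captured at $\mu+1$. The set $X'=\{c_i\upharpoonright\alpha:i\in A\}$ is a subset of $X$ lying in $W_\alpha$. But for $\gamma<\alpha$, $\pi_\gamma[X']$ computes $A$, so it is not in $V[G_{\gamma+2}]\supseteq W_\gamma$. Hence $X'$ is not captured below $\alpha$: closure under subsets is not ``immediate'', it fails.
\item For (4b), let $Y$ contain ground-model branches $m_j$ ($j<\mu$) separated at level $\mu$, together with branches $z_{j,\xi}$ ($\mu\le\xi<\kappa$) leaving $m_j$ exactly at level $\xi$. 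Enumerate them in $V$ with $z_{j,\xi}$ at index about $\mu\cdot\xi+j$. Then $X=\{\pi_\alpha(z_{j,\xi_j}):j<\mu\}\in W_\alpha$. For each $\gamma<\alpha$, $\pi_\gamma[X]$ is computed from ground-model data and the sequence $\langle\xi_j:\xi_j<\gamma\rangle$ of length ${<}\mu$, which lies in $V$. So $X$ is captured below $\alpha$, yet $\{\nu:\pi_\alpha(b_\nu)\in X\}$ is cofinal in $\alpha$.
\end{enumerate}
The second failure occurs for club many $\alpha\in S$, so no density argument can rescue clause (4).

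\textbf{How to repair it.} Keep level $\alpha$ inside $V[G_\alpha]$: take $T\upharpoonright\kappa=({}^{<\kappa}2)^V$, $T_\kappa=({}^{\kappa}2)^V$, and $W_\alpha=\mathcal P(T_\alpha)\cap V[G_\alpha]$.
\begin{itemize}
\item The ground-model branches number $2^\kappa$, since the iteration preserves $2^\kappa$. Your claim that every $b\in{}^\kappa2\cap V[G]$ runs through your tree is false anyway, e.g.\ if $b\upharpoonright\mu$ is added at a coordinate above $\mu+1$.
\item Silver's condition should come from $\kappa$-cc covering, not from your ``closure argument''. A closure point $\gamma$ of $\gamma'\mapsto\min\{\delta:\pi_{\gamma'}[X]\in V[G_\delta]\}$ says nothing about $\pi_\gamma[X]$ itself. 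Instead: any $X\in[T_\kappa]^{\le\mu}$ lies in some $V[G_\delta]$ and inside a ground-model set of size ${<}\kappa$. Hence $\pi_\gamma[X]\in V[G_\delta]\subseteq V[G_\gamma]$ for all $\gamma\ge\delta$.
\item For $\alpha\in S$, the $\alpha$-cc of $\mathrm{Col}(\mu,{<}\alpha)$ and the regularity of $\alpha=\mu^+$ in $V[G_\alpha]$ show that $R_\alpha$ is exactly the set of $X\in W_\alpha$ of size $\le\mu$ in $V[G_\alpha]$. This gives (3). It also gives (4b), once $\langle\pi_\alpha(b_\nu):\nu<\alpha\rangle$ is injective and lies in $V[G_\alpha]$.
\item That last fact, like (4a), is not a club property, because the $b_\nu$ depend on $H$. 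It must come out of your reflection step, which has to place the decided data in $V[G_\alpha]$, not merely in $V[G_{\alpha+2}]$. This works when $M\cap\kappa=\alpha$ is inaccessible, because then the relevant names only mention conditions of $\mathrm{Col}(\mu,{<}\alpha)$.
\end{itemize}
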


He also asked whether $W^*_\kappa(\lambda)$ can be forced without large cardinals and whether it can be forced in a cardinal-preserving way. Our Theorem~\ref{thm:main} gives a positive answer to both questions when $\lambda = \kappa^+$, under the hypotheses $2^{\theta} = \kappa$ and $2^{<\theta} = \theta$, where $\kappa = \theta^+$. Indeed, for every uncountable regular cardinal $\kappa$ satisfying $2^{<\kappa} = \kappa$, there exists a $\kappa$-closed and $\kappa^{+}$-cc forcing that adds a simplified $(\kappa, 1)$-morass \cite{MR736620}.

\section{Morasses and quagmires}\label{sec:quagmire}

As mentioned in the introduction, we only need a fragment of a morass in order to prove Theorem~\ref{thm:main}. What we need is a combinatorial structure called a quagmire, introduced by Burgess in \cite{MR555420}. Such a structure essentially retains only the tree structure inherent in simplified morasses (see the proof of Theorem~\ref{thm:quagmire} below). What follows is a slight generalization of Burgess's original definition.

\begin{definition}\label{def:quagmire}
Given an uncountable cardinal $\kappa$ and a set $S \subseteq \mathrm{Lim}(\kappa)$, a\linebreak \emph{$(\kappa, S)$-quagmire} is a quadruple $(T, {<}, {\lhd}, Q)$ such that:
\begin{enumerate}[label={(Q\arabic*)},  start=0]
\item 
\begin{enumerate}
\item $(T, <)$ is a $\kappa$-Kurepa tree,
\item $\lhd$ is a binary relation on $T$ whose restriction to each level of $T$ is a strict linear order; nodes of distinct heights are $\lhd$-incomparable,
\item the function $Q$ is a partial function from $T^2$ to $T$ whose domain consists of the pairs $(x, y) \in T^2$ such that $\mathrm{ht}(x) < \mathrm{ht}(y)$ and $x \lhd \pi_{\mathrm{ht}(x)}(y)$,
\end{enumerate}

\item if $(x, y) \in \mathrm{dom}(Q)$, then $x < Q(x, y) \lhd y$,
\item if $(x, y) \in \mathrm{dom}(Q)$ and $y < z$, then $Q(Q(x, y), z) = Q(x,z)$,
\item if $(y, z) \in \mathrm{dom}(Q)$ and $x \lhd y$, then $Q(x, Q(y, z)) = Q(x, z)$,
\item if $x \lhd y$ and $\mathrm{ht}(x) \in S \cup \{\kappa\}$, then there exists $\alpha < \mathrm{ht}(x)$ such that $\pi_\alpha(x) \lhd \pi_\alpha(y)$ and $x = Q(\pi_\alpha(x), y)$.
\end{enumerate}
\end{definition}

A $(\kappa, \emptyset)$-quagmire is precisely a $\kappa$-quagmire in Burgess's original terminology. 
For convenience, we extend $Q$ by setting $Q(x,y)=y$ whenever $x<y$, and $Q(x,y)=x$ whenever $x\lhd y$. Note that (Q2)-(Q4) are unaffected by this extension.

Next, we show that every simplified $(\kappa,1)$-morass gives rise to a $(\kappa,\mathrm{Lim}(\kappa))$-quagmire. This follows immediately from the tree structure naturally associated with a simplified morass. We assume some familiarity with the definition of simplified morasses (see, e.g., \cite{MR736620}  and \cite[Ch. VIII, \S 4]{MR750828}). By a well-known result of Velleman, who introduced simplified morasses, the existence of a simplified $(\kappa, 1)$-morass is equivalent to the existence of a $(\kappa, 1)$-morass \cite{MR736620}.

\begin{theorem}\label{thm:quagmire}
For every uncountable regular cardinal $\kappa$, if there is a simplified $(\kappa, 1)$-morass, then there is a $(\kappa, \mathrm{Lim}(\kappa))$-quagmire.
\end{theorem}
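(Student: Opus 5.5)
Recall that a simplified $(\kappa,1)$-morass is a pair $\langle \langle \theta_\alpha : \alpha \le \kappa\rangle, \langle \mathcal{F}_{\alpha\beta} : \alpha < \beta \le \kappa\rangle\rangle$. Here $\theta_\alpha < \kappa$ for $\alpha<\kappa$, $\theta_\kappa = \kappa^+$, and each $\mathcal{F}_{\alpha\beta}$ is a set of order-preserving maps $\theta_\alpha \to \theta_\beta$. The axioms are the usual ones: $|\mathcal{F}_{\alpha\beta}|<\kappa$ for $\beta<\kappa$; composition closure; $\mathcal{F}_{\alpha,\alpha+1}$ is either $\{\mathrm{id}\}$ or a pair $\{\mathrm{id}, h\}$ splitting at some $\delta$; directedness of the system; and at limit $\beta \le \kappa$, $\theta_\beta = \bigcup\{f[\theta_\alpha] : \alpha<\beta, f \in \mathcal{F}_{\alpha\beta}\}$, together with the amalgamation property at limits.

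The plan is to build the tree directly from these ordinals. Put
\[T = \{(\alpha,\nu) : \alpha \le \kappa,\ \nu < \theta_\alpha\},\]
with $(\alpha,\nu) < (\beta,\mu)$ iff $\alpha<\beta$ and $\mu = f(\nu)$ for some $f\in\mathcal{F}_{\alpha\beta}$. Order each level by $(\alpha,\nu)\lhd(\alpha,\nu')$ iff $\nu<\nu'$. For $\mathrm{ht}(x)=\alpha<\beta=\mathrm{ht}(y)$, $x=(\alpha,\nu)$, $y=(\beta,\mu)$, let $\pi_\alpha(y)=(\alpha,\bar\mu)$ and fix $g\in\mathcal{F}_{\alpha\beta}$ with $g(\bar\mu)=\mu$. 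Whenever $\nu<\bar\mu$, define $Q(x,y)=(\beta,g(\nu))$.

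\textbf{Step 1: $(T,<)$ is a $\kappa$-Kurepa tree.} The main point is that each $\mu<\theta_\beta$ has a unique predecessor on level $\alpha$. The standard morass lemma gives this: if $f,f'\in\mathcal{F}_{\alpha\beta}$ and $f(\nu)=f'(\nu')$, then $\nu=\nu'$ and $f\upharpoonright\nu = f'\upharpoonright\nu$. It is proved by induction on $\beta$, using the successor description and the limit directedness. The same lemma gives transitivity and well-foundedness, and shows that $Q$ does not depend on the choice of $g$. Level sizes are $\theta_\alpha<\kappa$, and the top level has $\kappa^+$ elements. Distinct top nodes are separated below because every $\mu<\kappa^+$ is in the range of some $f\in\mathcal{F}_{\alpha\kappa}$. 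We may need to add a root, and restrict to a club of levels if some $\theta_\alpha$ are finite; these are routine adjustments.

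\textbf{Step 2: (Q0)--(Q3).} (Q0) holds by construction. For (Q1), $x<Q(x,y)$ is witnessed by $g$, and $g(\nu)<g(\bar\mu)=\mu$ because $g$ is order preserving. (Q2) and (Q3) follow from closure under composition combined with the independence lemma. For (Q2): if $y<z$ via $h$, then $h\circ g$ witnesses both $Q(Q(x,y),z)$ and $Q(x,z)$. For (Q3): if $x\lhd y$ and both $y$ and $z$ project downward via the same $g$, then $Q(x,Q(y,z))=(\gamma,g(\nu))=Q(x,z)$.

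\textbf{Step 3: (Q4) at limit heights and at $\kappa$, the main obstacle.} Let $x=(\beta,\nu)\lhd y=(\beta,\mu)$ with $\beta$ limit or $\beta=\kappa$. By the limit clause there are $\alpha<\beta$ and $f\in\mathcal{F}_{\alpha\beta}$ with $\mu\in\mathrm{ran}(f)$, say $\mu=f(\bar\mu)$. Since $f$ is an order-preserving map from $\theta_\alpha$ onto an initial-segment-like set, we need $\nu$ to lie in $\mathrm{ran}(f)$ as well. For this we use the limit directedness: any two elements are jointly in the range of a single $f$. This holds for every limit $\beta\le\kappa$ because of the directedness axiom for simplified morasses. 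Choose such an $f$ with $\nu=f(\bar\nu)$ and $\mu=f(\bar\mu)$. Then $\bar\nu<\bar\mu$, so $\pi_\alpha(x)=(\alpha,\bar\nu)\lhd(\alpha,\bar\mu)=\pi_\alpha(y)$, and $Q(\pi_\alpha(x),y)=(\beta,f(\bar\nu))=x$. The delicate point is verifying that the directedness axiom is available at $\beta=\kappa$ and at all limits below $\kappa$, and that it yields a common map rather than only a common codomain. If it does not apply directly, I would first use directedness to obtain $\alpha'<\beta$ with maps hitting each point, and then amalgamate via the directedness clause to get a single $f$.
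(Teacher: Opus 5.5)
Your proposal is correct and follows the paper's route. You use the same tree $T=\bigcup_{\alpha\le\kappa}\{\alpha\}\times\theta_\alpha$ and the same $\lhd$ and $Q$, get (Q1)--(Q3) from order-preservation, composition and the independence lemma, and get (Q4) from the limit clauses; the paper simply cites Velleman's Theorem 3.1, Lemma 3.2 and Theorem 3.3 for the facts you re-derive. Your hedges are unnecessary. No root or club adjustments are needed, since $\theta_0=1$ and finite levels are allowed in a $\kappa$-tree, and Velleman's amalgamation axiom applies at every limit $\beta\le\kappa$, including $\kappa$, and produces a single common map. That same axiom is also what separates distinct top nodes; the fact that the ranges of maps in $\mathcal{F}_{\alpha\kappa}$ cover $\kappa^+$ is not enough on its own.
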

\begin{proof}

Let $(\langle \theta_\alpha : \alpha \le \kappa\rangle, \langle \mathcal{F}_{\alpha, \beta} : \alpha < \beta \le \kappa\rangle)$ be a simplified $(\kappa, 1)$-morass. 
Let $T = \bigcup_{\alpha \le \kappa} \{\alpha\} \times \theta_\alpha$. Given $s,t \in T$ with $s = (\alpha, \mu)$ and $t = (\beta, \nu)$, let $s \prec t$ if and only if $\alpha < \beta$ and there exists $f \in \mathcal{F}_{\alpha, \beta}$ such that $f(\mu) = \nu$. 

The binary relation $\prec$ is a tree ordering and $T_\alpha = \{\alpha\} \times \theta_\alpha$ for every $\alpha \le \kappa$ \cite[Theorem 3.1]{MR736620}. In particular, $(T, \prec)$ is a $\kappa$-Kurepa tree.

Moreover, suppose that $s=(\alpha,\mu)\prec t=(\beta,\nu)$, and let $f\in\mathcal F_{\alpha,\beta}$ witness $s\prec t$. Then the restriction $f\upharpoonright(\mu+1)$ is uniquely determined (i.e., it does not depend on the choice of $f$) \cite[Lemma 3.2]{MR736620}. Hence, the map $\pi_{st} \coloneqq f \upharpoonright (\mu +1)$ is well-defined, and the following properties hold \cite[Theorem 3.3]{MR736620}:
\begin{enumerate}
\item if $t_0 \prec t_1 \prec t_2$, then $\pi_{t_0 t_2} = \pi_{t_1 t_2} \circ \pi_{t_0 t_1}$,
\item if $s \prec t$, where $s = (\alpha, \mu)$ and $t = (\beta, \nu)$, then for $\tau < \mu$, letting $s' = (\alpha, \tau)$ and $t' = (\beta, \pi_{st}(\tau))$, we have $s' \prec t'$ and $\pi_{s't'} = \pi_{st} \upharpoonright (\tau + 1)$,
\item if $\alpha \le \kappa$ is a limit ordinal and $t = (\alpha, \nu)$, then $\nu + 1 = \bigcup \{\mathrm{ran}(\pi_{st}) : s \prec t\}$.
\end{enumerate}

Now it is easy to define $\lhd$ and $Q$ so that $(T, \prec, \lhd, Q)$ is a $(\kappa, \mathrm{Lim}(\kappa))$-quagmire. First, for every $\alpha \le \kappa$ and $\nu, \mu < \theta_\alpha$, let $(\alpha, \nu) \lhd (\alpha, \mu)$ if and only if $\nu < \mu$. Next, suppose that $s \prec t$, where $s = (\alpha, \mu)$ and $t = (\beta, \nu)$. For $s' = (\alpha, \tau)$ with $\tau < \mu$, let 
\[
Q(s', t) \coloneqq (\beta, \pi_{st}(\tau)).
\]

Property (Q1) follows directly from (2) and from the fact that the maps $\pi_{st}$ are order-preserving. Properties (Q2), (Q3), and (Q4) follow directly from (1), (2), and (3), respectively.
\end{proof}

Finally, we record some basic properties of quagmires that will be useful in the next section.

\begin{lemma}\label{lemma:basicquag}
Let $(T, <, \lhd, Q)$ be a $(\kappa, S)$-quagmire. Then the following hold:
\begin{enumerate}[label={\upshape (\arabic*)}]
\item $|T_\kappa| = \kappa^+$ and every element of $[T_{\kappa}]^{\le \kappa}$ has an $\lhd$-upper bound;
\item for every $x \in T$ and every $\alpha < \mathrm{ht}(x)$, the map $Q(\cdot, x) \upharpoonright T_\alpha$ is an order-embedding;
\item for every $(x, y) \in \mathrm{dom}(Q)$ and $\alpha$ with $\mathrm{ht}(x) < \alpha < \mathrm{ht}(y)$, $\pi_\alpha(Q(x, y)) = Q(x, \pi_\alpha(y))$;
\item for every $x, y$ and $\alpha < \beta < \mathrm{ht}(x)$, if $x = Q(\pi_\alpha(x), y)$, then $x = Q(\pi_\beta(x), y)$.
\end{enumerate}
\end{lemma}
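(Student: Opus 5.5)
For item (1), the main input is axiom (Q4) with $\mathrm{ht}(x)=\kappa$. That case is always allowed, since the axiom covers heights in $S\cup\{\kappa\}$. Fix $y\in T_\kappa$. For every $x\in T_\kappa$ with $x\lhd y$, (Q4) gives some $\alpha<\kappa$ with $x=Q(\pi_\alpha(x),y)$. So $x$ is determined by the node $\pi_\alpha(x)\in T\upharpoonright\kappa$. Since $T\upharpoonright\kappa$ is a $\kappa$-tree, $|T\upharpoonright\kappa|\le\kappa$, hence every $y\in T_\kappa$ has at most $\kappa$ many $\lhd$-predecessors.

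\emph{Size of $T_\kappa$.} I would pick a cofinal well-ordered subset of the linear order $(T_\kappa,\lhd)$. Each of its proper initial segments has size $\le\kappa$, so its length is at most $\kappa^+$. Hence $T_\kappa$ is a union of at most $\kappa^+$ sets, each of size $\le\kappa$, and $|T_\kappa|\le\kappa^+$. The Kurepa property gives $|T_\kappa|>\kappa$, so $|T_\kappa|=\kappa^+$.

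\emph{Upper bounds.} Let $A\in[T_\kappa]^{\le\kappa}$ and suppose $A$ has no strict $\lhd$-upper bound. Then, by linearity, $T_\kappa\subseteq\bigcup_{a\in A}\{x: x\lhd a \text{ or } x=a\}$. This union has size $\le\kappa$, contradicting $|T_\kappa|=\kappa^+$.

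For item (2), note that the domain of $Q(\cdot,x)\upharpoonright T_\alpha$ is $\{z\in T_\alpha: z\lhd\pi_\alpha(x)\}$. Take $z\lhd z'$ in this domain and put $w=Q(z',x)$. By (Q1), $z'<w$, so $\pi_\alpha(w)=z'$ and therefore $(z,w)\in\mathrm{dom}(Q)$. Axiom (Q3) gives $Q(z,w)=Q(z,x)$, and (Q1) applied to the pair $(z,w)$ gives $Q(z,w)\lhd w$. Hence $Q(z,x)\lhd Q(z',x)$. Injectivity then follows because $\lhd$ is a strict linear order on each level.

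For item (3), put $w=\pi_\alpha(y)$. Since $\pi_{\mathrm{ht}(x)}(w)=\pi_{\mathrm{ht}(x)}(y)$, we get $(x,w)\in\mathrm{dom}(Q)$. Applying (Q2) with $w<y$ yields $Q(Q(x,w),y)=Q(x,y)$. By (Q1) applied to the pair $(Q(x,w),y)$, $Q(x,w)<Q(x,y)$. As $Q(x,w)$ has height $\alpha$, it equals $\pi_\alpha(Q(x,y))$.

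For item (4), I would reduce to item (3). Write $a=\pi_\alpha(x)$, and first assume we are in the genuine case $\mathrm{ht}(y)=\mathrm{ht}(x)$ with $(a,y)\in\mathrm{dom}(Q)$. Set $y'=\pi_\beta(y)$. Item (3), applied to the pair $(a,y)$ with $\alpha<\beta<\mathrm{ht}(y)$, gives $Q(a,y')=\pi_\beta(Q(a,y))=\pi_\beta(x)$. Axiom (Q2) with $y'<y$ gives $Q(Q(a,y'),y)=Q(a,y)=x$, that is, $x=Q(\pi_\beta(x),y)$. The degenerate cases coming from the conventions $Q(u,v)=v$ for $u<v$ and $Q(u,v)=u$ for $u\lhd v$ are checked directly. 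I expect the main obstacle to be this bookkeeping, together with the cardinality argument in (1); the algebraic items are direct verifications from (Q1)--(Q3).
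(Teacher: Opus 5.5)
Your proposal is correct and follows the paper's proof essentially step by step. In (1) you use (Q4) to inject the $\lhd$-predecessors of a top-level node into $T\upharpoonright\kappa$, and you also write out the cardinality argument that the paper only says ``directly follows'' from the Kurepa property. For (2) you use (Q3) followed by (Q1), for (3) you use (Q2) followed by (Q1), and for (4) you combine (3) with (Q2), exactly as the paper does. One small point: under the paper's extended convention, the domain of $Q(\cdot,x)\upharpoonright T_\alpha$ in (2) also contains $\pi_\alpha(x)$, which is sent to $x$. The paper covers this by taking $z\lhd y\trianglelefteq\pi_\alpha(x)$, and the extra case is immediate from (Q1), since $Q(z,x)\lhd x$ for $z\lhd\pi_\alpha(x)$.
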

\begin{proof}
$(1)$: It suffices to prove that every $\lhd$-initial segment of $T_\kappa$ has cardinality at most $\kappa$. Indeed, once this is proved, it directly follows from the fact that $T$ is a $\kappa$-Kurepa tree that $|T_\kappa| = \kappa^+$ and that no subset of $T_\kappa$ of cardinality at most $\kappa$ is $\lhd$-cofinal in $T_\kappa$.

Fix $x\in T_\kappa$. We show that its $\lhd$-initial segment has cardinality at most $\kappa$. For any $y \lhd x$, there exists, by (Q4), an ordinal $\alpha_y < \kappa$ such that $\pi_{\alpha_y}(y) \lhd \pi_{\alpha_y}(x)$ and $y = Q(\pi_{\alpha_y}(y), x)$. Hence, this observation together with (Q1) gives
\[
\{y : y \lhd x\} = \bigcup_{\alpha < \kappa} \{Q(z, x) : z \lhd \pi_\alpha(x)\}.
\]
Since the set on the right-hand side clearly has cardinality at most $\kappa$, we are done.

$(2)$: Pick $y,z$ with $z \lhd y \trianglelefteq \pi_\alpha(x)$. By (Q3), $Q(z, x) = Q(z, Q(y, x))$. By (Q1), $Q(z, Q(y, x)) \lhd Q(y, x)$. Hence, $Q(z, x) \lhd Q(y, x)$.

$(3)$: By (Q2), $Q(x, y) = Q(Q(x, \pi_\alpha(y)), y)$. Moreover, by (Q1), we have \[
Q(x, \pi_\alpha(y)) < Q(Q(x, \pi_\alpha(y)), y).
\]
Hence, $Q(x, \pi_\alpha(y)) < Q(x, y)$ or, equivalently, $\pi_\alpha(Q(x, y)) = Q(x, \pi_\alpha(y))$.

$(4)$: By (Q2), 
\[
Q(\pi_\alpha(x), y) = Q(Q(\pi_\alpha(x), \pi_\beta(y)), y).
\]
 By (3), 
\[
Q(\pi_\alpha(x), \pi_\beta(y)) = \pi_\beta(Q(\pi_\alpha(x), y)) = \pi_\beta(x).
\]
 Hence, $Q(\pi_\alpha(x), y) = Q(\pi_\beta(x), y)$.
\end{proof}

\section{Main result}\label{sec:main}

This section is devoted to the proof of the following theorem. 

\begin{theorem}\label{thm:mains}
Given an infinite cardinal $\kappa$ with $\kappa^{< \kappa} = \kappa$, if there is a stationary $S \subseteq E^{\kappa^+}_\kappa$ such that $\diamondsuit(S)$ holds and a $(\kappa^+, S)$-quagmire exists, then $W^*_{\kappa^+}(\kappa^{++})$ holds.
\end{theorem}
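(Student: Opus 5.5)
We will use the given $(\kappa^+, S)$-quagmire $(T,<,\lhd,Q)$ itself as the Kurepa tree. Put $\theta=\kappa$, so the target principle is $W^*_{\kappa^+}(\kappa^{++})$ with $\kappa^+=\theta^+$. By Lemma~\ref{lemma:basicquag}(1), $|T_{\kappa^+}|=\kappa^{++}$. It remains to define $W_\alpha$ and $A_\alpha$ and to check clauses (2)--(4), with the stationary set of the principle taken to be $S$ (possibly shrunk). The guiding idea is that the quagmire maps $Q(\cdot,x)$ are order-embeddings (Lemma~\ref{lemma:basicquag}(2)). So a set of size $\le\kappa$ at the top is $\lhd$-bounded by Lemma~\ref{lemma:basicquag}(1). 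Below its bound $x$, it is the image under $Q(\cdot,x)$ of sets at lower levels, by (Q4) and Lemma~\ref{lemma:basicquag}(4).

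\textbf{Definition of $W_\alpha$.} For $\alpha<\kappa^+$, let $W_\alpha$ be the $\kappa$-complete Boolean subalgebra of $\mathcal P(T_\alpha)$ generated by the following sets:
\begin{enumerate}
\item all sets in $[T_\alpha]^{<\kappa}$;
\item all sets of the form $Q[\pi_\beta[Y]\cap\{z: z\lhd \pi_\beta(x)\},\, x]$ for $\beta<\alpha$, $x\in T_\alpha$ and $Y$ in the algebra previously built at level $\beta$ (defined recursively);
\item the $\lhd$-initial segments of $T_\alpha$.
\end{enumerate}
Since $|T_\alpha|\le\kappa$ and $\kappa^{<\kappa}=\kappa$, a recursive count shows $|W_\alpha|\le\kappa$, so the algebras are small.

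\textbf{Clause (1), the capturing condition.} Given $X\in[T_{\kappa^+}]^{\le\kappa}$, choose an $\lhd$-upper bound $x$ of $X$. For each $y\in X$, (Q4) gives $\alpha_y$ with $y=Q(\pi_{\alpha_y}(y),x)$. We then want a single $\alpha$ beyond which $\pi_\beta[X]=Q[\text{stuff},\pi_\beta(x)]$ is generated by the sets in (2), and on which $\pi_\beta\upharpoonright X$ is injective. This is the step where $|X|=\kappa$ bites: the $\alpha_y$ need not be bounded. For $|X|<\kappa$ the bound exists by regularity and the set is small anyway. For the stronger demand that $\{b_\mu:\mu<\alpha\}$ be captured at $\alpha$, we only need initial segments. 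Here we use a club argument: the set of $\alpha$ closed under $\mu\mapsto\alpha_{b_\mu}$, with $\pi_\alpha$ injective on $\{b_\mu:\mu<\alpha\}$, is club in $\kappa^+$. At such $\alpha$, Lemma~\ref{lemma:basicquag}(3) shows that $\{\pi_\beta(b_\mu):\mu<\alpha\}$ equals $Q[\pi_\beta[\{b_\mu\}]$-projections, $\pi_\beta(x)]$, written as a $\le\kappa$ union of generators. So we put into $W_\beta$ the $\kappa$-unions along the increasing chains indexed by $\alpha$. This is why the generators must use projections of the whole algebra along $\lhd$-segments.

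\textbf{Clause (3).} This requires that, at $\alpha\in S$, captured-below sets are closed under ${<}\kappa$-unions and $W_\alpha$-subsets. It should follow from $\mathrm{cf}(\alpha)=\kappa$, since fewer than $\kappa$ capture points have a supremum below $\alpha$, and from the recursive closure of the algebras under restriction.

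\textbf{Clause (4), the diamond part.} Fix a $\diamondsuit(S)$ sequence, coded (using $|T\upharpoonright\alpha|\le\kappa$ and $|W_\alpha|\le\kappa$) to guess subsets of $\alpha\times W_\alpha^2$ and also the bound data. Let $A_\alpha$ be the guessed set of pairs. Given $f$ and the enumeration $\langle b_\mu\rangle$, let $C$ be the club from the previous paragraph, further closed so that $\pi_\alpha[f_i(\eta)]$ for $\eta<\alpha$ are determined by $\{b_\mu:\mu<\alpha\}$. The diamond yields stationarily many $\alpha\in S\cap C$ with correct guesses, giving (a) and (c). For (b), let $X\subseteq\{\pi_\alpha(b_\mu):\mu<\alpha\}$ be captured at $\beta<\alpha$. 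Then $X$ is determined by its $\beta$-projection, which sits below $\pi_\beta(x)$ and involves only $b_\mu$ with $\alpha_{b_\mu}<$ some bound. Combined with injectivity and the fact that $\mathrm{cf}(\alpha)=\kappa$ has been used to separate tail elements, this should force $\sup<\alpha$.

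\textbf{Main obstacle.} I expect the main obstacle to be (b) together with the exact choice of $W_\alpha$. The algebras must be rich enough for capture, but not so rich that a cofinal subset of $\{\pi_\alpha(b_\mu)\}$ is captured below $\alpha$. My first attempt would be to generate $W_\alpha$ only from $Q$-images of sets captured below, and to use the (Q4) clause at levels in $S$ to show that any such image meets $\{\pi_\alpha(b_\mu):\mu<\alpha\}$ in a bounded set.
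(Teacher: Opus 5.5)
There is a genuine gap. Your proposal has no mechanism that puts $\pi_\alpha[\{b_\mu:\mu<\alpha\}]$ into $W_\alpha$, and clauses (3) and (4b), where the real difficulty lies, are left as expectations rather than arguments.

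\textbf{Capture clause.} Your generators only lift sets from the earlier algebras, so capture is circular for $|X|=\kappa$. To obtain $\pi_\beta[X]$ as a lift you already need $\pi_\gamma[X]\in W_\gamma$. Nothing among your generators (small sets, initial segments, lifts of these) supplies arbitrary $\kappa$-sized subsets of lower levels. The obstacle you name is not real: the $\alpha_y$ \emph{are} bounded, since $|X|\le\kappa$ and $\kappa^+$ is regular. The paper instead enumerates $\mathcal P(T_\beta)$ as $\langle X^\beta_\xi:0<\xi<\kappa^+\rangle$, which is possible because $\diamondsuit(S)$ gives $2^\kappa=\kappa^+$. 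It then generates $W_\alpha$ from the lifts $D(H,\beta,\xi)$ with $H\in[T_\alpha]^{<\kappa}$ and $\beta,\xi<\alpha$. With $\gamma=\sup_y\alpha_y$ and $X^\gamma_\xi=\pi_\gamma[X]$, one gets $\pi_\beta[X]=D(\pi_\beta(\bar b),\gamma,\xi)$ for all $\beta>\max(\gamma,\xi)$.

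\textbf{Clause (3).} Your argument only bounds the capture points. It does not make $\pi_\delta$ injective on a union of ${<}\kappa$ captured sets of size $\kappa$: the levels separating the $\kappa$ many cross pairs can be cofinal in $\alpha$, since $\mathrm{cf}(\alpha)=\kappa$. Nor does it give $\pi_\delta[A]\in W_\delta$ for a subset $A\in W_\alpha$ of a captured set.

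\textbf{Clause (4a), the central issue.} It requires $\pi_\alpha[\{b_\mu:\mu<\alpha\}]\in W_\alpha$ at level $\alpha$ itself. This set depends on $Y$, which is one of $\kappa^{++}$ possibilities, while $|W_\alpha|\le\kappa$. So no canonically defined algebra will contain it. ``Putting in the $\kappa$-unions along increasing chains'' is not well defined. Closing under $\kappa$-unions outright would, since singletons are present, make $W_\beta=\mathcal P(T_\beta)$, of size $2^\kappa=\kappa^+$ on all large levels, violating $|W_\beta|<\kappa^+$.

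The diamond's guess must itself be put into $W_\alpha$ (the paper's $X^\alpha_0$), and this is exactly what endangers (3) and (4b). The paper controls it as follows.
\begin{enumerate}
\item It adds the guess only when $(\dagger)$ holds: the guess equals $\pi_\alpha[Y\cap N]$ for some elementary $N\prec H(\kappa^{+++})$ with $N\cap\kappa^+=\alpha$ and some $Y\in N$ of size $\kappa^+$.
\item It proves that the captured-below members of $W_\alpha$ are exactly $V_\alpha$ (Claims~\ref{claim:5}--\ref{claim:8}). On one side, $V_\alpha$ is closed under ${<}\kappa$-unions, differences and intersection with $X^\alpha_0$.
\item On the other side, a set containing $X^\alpha_0\setminus B$ or $T_\alpha\setminus(X^\alpha_0\cup B)$, with $B\in V_\alpha$, has no injective projection below $\alpha$. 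This is because the corresponding part of $Y$, respectively of $T_{\kappa^+}\setminus Y$, has size $\kappa^+$, respectively $\kappa^{++}$.
\end{enumerate}
Clause (4b) then comes from elementarity. A captured $A\subseteq X^\alpha_0$ lies in $V_\alpha$, so $K=Y\cap\pi_{\alpha\kappa^+}^{-1}(A)\cap N$ belongs to $N$ and has size at most $\kappa$. Hence $\{\mu:b_\mu\in K\}$ is bounded in $\kappa^+$, and therefore in $\alpha$. Your sketch of (b), that $X$ ``involves only $b_\mu$ with $\alpha_{b_\mu}$ below some bound'', has no counterpart of this argument and does not follow from capture.
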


Theorem~\ref{thm:main} follows directly from Theorems~\ref{thm:quagmire} and \ref{thm:mains}. But before proving Theorem~\ref{thm:mains}, we need the following routine lemma (cf. \cite[Lemma 2.2]{MR3692231}).

\begin{lemma}\label{lemma:diamond}
Given an infinite cardinal $\kappa$ with $\kappa^{<\kappa} = \kappa$ and a stationary $S \subseteq E^{\kappa^+}_\kappa$, the following are equivalent:
\begin{enumerate}[label={\upshape (\arabic*)}]
\item $\diamondsuit(S)$,
\item There exists a sequence $\langle \Omega_\alpha : \alpha < \kappa^{+}\rangle$ such that for every cardinal $\mu > \kappa^+$, parameter $p \in H(\mu)$, and subset $\Omega \subseteq H(\kappa^{+})$, there exists an elementary submodel $N \prec H(\mu)$ satisfying:
\begin{enumerate}[label={\upshape (\alph*)}]
\item ${}^{<\kappa} N \subseteq N$,
\item $p \in N$,
\item $\kappa^+ \cap N \in S$,
\item $\Omega_{\kappa^+ \cap N} = \Omega \cap N$.
\end{enumerate}
\end{enumerate}
\end{lemma}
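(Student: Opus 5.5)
The direction (2)$\Rightarrow$(1) is immediate, so most of the work lies in (1)$\Rightarrow$(2).

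For (2)$\Rightarrow$(1), fix $A\subseteq\kappa^+$ and a club $C\subseteq\kappa^+$. Apply (2) with $\Omega=A$ and $p=C$ (for any $\mu>\kappa^+$), and define $A_\alpha=\Omega_\alpha\cap\alpha$. Let $\delta=\kappa^+\cap N$. Since $C\in N$, elementarity gives that $C\cap\delta$ is unbounded in $\delta$, so $\delta\in C$. We have $\delta\in S$, and $A\cap N=A\cap\delta$ because $\delta\subseteq N$; this uses that $\kappa\subseteq N$, as $\kappa+1\subseteq N$ by elementarity and ${}^{<\kappa}N\subseteq N$, and $N\cap\kappa^+$ is an ordinal. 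Hence $A_\delta=A\cap\delta$, so $\langle A_\alpha\rangle$ is a $\diamondsuit(S)$-sequence.

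For (1)$\Rightarrow$(2), I first transfer the diamond to $H(\kappa^+)$. Since $\kappa^{<\kappa}=\kappa$ and $\diamondsuit(S)$ gives $2^\kappa=\kappa^+$, we have $|H(\kappa^+)|=\kappa^+$. Fix a bijection $e:\kappa^+\to H(\kappa^+)$. The set $D$ of $\alpha<\kappa^+$ with $e[\alpha]=H(\kappa^+)\cap M_\alpha$ is club, where $M_\alpha$ is a suitable continuous chain of elementary submodels. A $\diamondsuit(S)$-sequence $\langle A_\alpha\rangle$ then yields $\Omega_\alpha:=e[A_\alpha]$. The standard argument shows that $\{\alpha\in S: \Omega\cap e[\alpha]=\Omega_\alpha\}$ is stationary for every $\Omega\subseteq H(\kappa^+)$. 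This is obtained by applying $\diamondsuit(S)$ to $e^{-1}[\Omega]$ and intersecting with a club. The important point is that this set is stationary, and not merely nonempty.

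Now fix $\mu$, $p$ and $\Omega$. I would build a continuous $\in$-chain $\langle N_i : i<\kappa^+\rangle$ of elementary submodels of $H(\mu)$ with the following properties: each $N_i$ has size $\kappa$ and contains $p$, $e$, $\Omega$, $\langle\Omega_\alpha\rangle$; $N_i\cap\kappa^+$ is an ordinal; ${}^{<\kappa}N_{i+1}\subseteq N_{i+1}$ (possible since $\kappa^{<\kappa}=\kappa$); and $N_i\in N_{i+1}$. Then $\{N_i\cap\kappa^+ : i<\kappa^+\}$ is club, and so is the set of $\delta$ with $N_\delta\cap\kappa^+=\delta$ and $e[\delta]=N_\delta\cap H(\kappa^+)$. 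The latter holds because $e\in N_i$ and elementarity give $e[N_i\cap\kappa^+]=N_i\cap H(\kappa^+)$. Pick $\delta$ in this club, in $S$, with $\Omega\cap e[\delta]=\Omega_\delta$. Then $\Omega\cap N_\delta=\Omega_\delta$, giving (b), (c) and (d).

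The main obstacle is (a), the closure ${}^{<\kappa}N_\delta\subseteq N_\delta$ at limit stages. Here $\mathrm{cf}(\delta)=\kappa$ because $S\subseteq E^{\kappa^+}_\kappa$. Consequently any sequence of length $<\kappa$ of elements of $N_\delta=\bigcup_{i<\delta}N_i$ is contained in some $N_{i+1}$, since its indices are bounded below $\delta$. By the closure of the successor model $N_{i+1}$, the sequence belongs to $N_{i+1}\subseteq N_\delta$. This is exactly why the hypothesis restricts $S$ to cofinality $\kappa$. I therefore expect this step to work once the chain is arranged so that successor models are ${<}\kappa$-closed. It only needs care in checking that the cofinality argument applies to sequences of length $<\kappa$, including the case where $\kappa$ is singular, in which a sequence has length $<\kappa\le\mathrm{cf}(\delta)$ as required.
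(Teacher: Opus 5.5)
Your argument is correct, and it is the standard chain-of-elementary-submodels proof. The paper omits this proof as routine, pointing to Brodsky--Rinot, Lemma~2.2. Your route is the same: transfer $\diamondsuit(S)$ to $H(\kappa^+)$ via a bijection $e:\kappa^+\to H(\kappa^+)$, build a continuous chain of $\kappa$-sized models with ${<}\kappa$-closed successor stages, and use $\mathrm{cf}(\delta)=\kappa$ at the chosen $\delta\in S$ to get clause (a).

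There are two remarks. First, $\kappa^{<\kappa}=\kappa$ already forces $\kappa$ to be regular by K\"onig's theorem, so the singular case you worry about never arises.

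Second, and more substantively, two of your steps tacitly need conditions on $\mu$:
\begin{itemize}
\item $N_i\in N_{i+1}$ needs $\mathrm{cf}(\mu)>\kappa$, so that $\kappa$-sized subsets of $H(\mu)$ lie in $H(\mu)$.
\item ``${}^{<\kappa}N_{i+1}\subseteq N_{i+1}$ is possible'' needs $\mathrm{cf}(\mu)\ge\kappa$, so that ${<}\kappa$-sequences from $H(\mu)$ lie in $H(\mu)$.
\end{itemize}
This is really a defect of the statement rather than of your proof. Take $\kappa=\omega_1$ and $\mu=\aleph_\omega$. Every $N\prec H(\aleph_\omega)$ contains each $\aleph_n$, since each is definable. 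So ${}^{\omega}N\subseteq N$ would force $\langle\aleph_n:n<\omega\rangle\in N\subseteq H(\aleph_\omega)$, which is false. Hence clause (a) is unsatisfiable there.

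The lemma should therefore be read with $\mu$ regular, which is the only case the paper uses ($\mu=\lambda^{++}$). Under that reading your proof goes through as written. For $\mathrm{cf}(\mu)=\kappa$ it also works if you replace $N_i\in N_{i+1}$ by $N_i\subseteq N_{i+1}$ together with $N_i\cap\kappa^+\in N_{i+1}$.
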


\begingroup
\setcounter{claim}{0}
\renewcommand{\theclaim}{\getrefnumber{thm:mains}.\arabic{claim}}

\begin{proof}[Proof of Theorem~\ref{thm:mains}]
Denote $\kappa^+$ by $\lambda$. Let $(T, <, \lhd, Q)$ be a $(\lambda, S)$-quagmire and let $\langle \Omega_\alpha : \alpha < \lambda\rangle$ witness Lemma~\ref{lemma:diamond}(2). We can assume without loss of generality that $T \in H(\lambda^{++})$ and $T \upharpoonright \lambda \subseteq \lambda$. 

For every $\alpha < \lambda$, enumerate $\mathcal{P}(T_\alpha)$ as $\langle X^\alpha_\xi : 0 < \xi < \lambda\rangle$. We intentionally leave $X^\alpha_0$ undefined for now. The indices $\xi > 0$ will provide ordinary bookkeeping for the subsets of the levels of $T$, while the index $\xi = 0$ is reserved for the information supplied by our diamond sequence.
 
We first use $\Omega_\alpha$ to decode possible guesses for nodes of height $\alpha$. For every $\alpha \in S$ and $\mu < \alpha$, if there exists $x \in T_\alpha$ such that
\[
\big\{y \in T \upharpoonright \alpha : (\mu, y) \in \Omega_\alpha\}
\]
 is a $<$-cofinal subset of $\{z \in T : z < x\}$, then, by (Q4) (using $\alpha \in S$), such an $x$ is unique, and we denote it by $x_\mu^\alpha$. Otherwise, choose $x_\mu^\alpha \in T_\alpha$ arbitrarily.

Now we are ready to define the set $X^\alpha_0 \subseteq T_\alpha$ for every $\alpha < \lambda$ by induction on $\alpha$. If $\alpha \in S$ and satisfies the following property $(\dagger)$, then we let $X^\alpha_0 = \{x_\mu^\alpha : \mu < \alpha\}$:
\begin{center}
  \begin{tabular}{@{}r@{\qquad}l@{}}
    \raisebox{-.5\height}{\textnormal{($\dagger$)}} &
    \begin{minipage}[c]{\dimexpr\linewidth-5em\relax}
      There exists an elementary submodel
      $N \prec H(\lambda^{++})$ and a set
      $Y \in [T_{\lambda}]^{\lambda}$ such that the following conditions hold:
      \begin{enumerate}
        \item ${}^{< \kappa} N \subseteq N$,
        \item $\alpha=N\cap\lambda$,
        \item $T,{<},{\lhd},Q,
          \langle X^\beta_\xi:\beta<\lambda,\ 0<\xi<\lambda\rangle\in N$,
        \item $X^\beta_0\in N$ for every $\beta<\alpha$,
        \item $Y\in N$,
        \item $\{x_\mu^\alpha:\mu<\alpha\}=\pi_\alpha[Y\cap N]$.
      \end{enumerate}
    \end{minipage}
  \end{tabular}
\end{center}
If $\alpha \not\in S$ or does not satisfy $(\dagger)$, then we simply let $X^\alpha_0 = \emptyset$. This completes the definition of the sequence $\langle X^\alpha_0 : \alpha < \lambda\rangle$. Intuitively, we let $X^\alpha_0$ be the set of guessed nodes of height $\alpha$ precisely when this set arises as the projection of a set of cofinal branches whose cardinality is, morally, $\lambda$. The sets $X^\alpha_0$ will play a crucial role in ensuring properties (4a) and (4b) of $W^*_{\lambda}(\lambda^+)$.

Next we use the map $Q$ to lift $\trianglelefteq$-initial segments of the sets $X^\beta_\xi$ to higher levels of $T$. Given $\alpha \le \lambda$, $H \in [T_\alpha]^{<\kappa}$, $\beta < \alpha$, and $\xi < \lambda$, let
\[
D(H, \beta, \xi) \coloneqq \big\{Q(\bar{y}, x) : x \in H, \bar{y} \trianglelefteq \pi_\beta(x) \text{ and } \bar{y}\in X^\beta_\xi\big\},
\]
where $\trianglelefteq$ is the reflexive closure of $\lhd$---recall our extension of $Q$ (see Section~\ref{sec:quagmire}). Thus, $D(H, \beta, \xi)$ consists of the lifts, via $Q$, of the elements of $X^\beta_\xi$ lying $\trianglelefteq$-below the $\beta$-projections of the elements of $H$. If $H = \{x\}$, then we simply write $D(x, \beta, \xi)$ instead of $D(\{x\}, \beta, \xi)$. Clearly, $D(H, \beta, \xi) = \bigcup_{x \in H} D(x, \beta, \xi)$.

We use frequently the following observation: for $x \in T_\alpha$, (Q1) implies that whenever $y \in D(x, \beta, \xi)$, the witness $\bar{y}$ must be equal to $\pi_\beta(y)$. In other words, for every $x,y \in T_\alpha$, $y \in D(x, \beta, \xi)$ if and only if $\pi_\beta(y) \trianglelefteq \pi_\beta(x)$ and $\pi_\beta(y) \in X^\beta_\xi$ and $y = Q(\pi_\beta(y), x)$.

For every $\alpha < \lambda$ let
\[
V_\alpha \coloneqq \big\{D(H, \beta, \xi) : H \in [T_\alpha]^{<\kappa}  \text{ and } \beta, \xi  < \alpha\big\}.
\]
Let $W_\alpha$ be the closure of $V_\alpha \cup \{ X^\alpha_0\} \cup \{\{x\} : x \in T_\alpha\}$ under complements (in $T_\alpha$) and ${<}\kappa$-unions. In other words, $W_\alpha$ is the $\kappa$-complete Boolean subalgebra of $\mathcal{P}(T_\alpha)$ generated by $V_\alpha \cup \{ X^\alpha_0\} \cup \{\{x\} : x \in T_\alpha\}$.  Note that $W_\alpha$ has cardinality at most $\kappa$, since we are assuming $\kappa^{<\kappa} = \kappa$.

Now we use $\Omega_\alpha$ again  to decode possible guesses for pairs of elements of $W_\alpha$. For each $\alpha < \lambda$ we define a set $A_\alpha \subseteq W_\alpha^2$. If $\alpha \not\in S$, then let $A_\alpha = \emptyset$. Otherwise, let $A_\alpha$ be the set of all pairs $(u_0, u_1) \in W_\alpha^2$ such that there exists $\beta < \alpha$ for which
\[
u_i = \big\{x_\gamma^\alpha  : (\beta, \gamma, i) \in \Omega_\alpha\big\} \text{ for } i = 0,1.
\]

Finally, let $S' \subseteq S$ be the set of all ordinals $\alpha \in S$ such that there exists an elementary submodel $N \prec H(\lambda^{++})$ with
\begin{enumerate}[label={(\alph*)}]
\item ${}^{<\kappa}N \subseteq N$,
\item $\alpha = N \cap \lambda$,
\item $T, {<}, {\lhd},Q, \langle X^\beta_\gamma:\beta, \gamma <\lambda\rangle\in N$.
\end{enumerate}
Note that $S \setminus S'$ is nonstationary. Note also that for every $\alpha \in S'$, with witness $N$, $T \upharpoonright \alpha = (T \upharpoonright \lambda) \cap N$. Indeed, $\kappa \in N$ and $\kappa \subseteq N$ and $T_\beta$ has cardinality at most $\kappa$ for every $\beta < \lambda$. 

The rest of the proof consists in showing that $(T,<), \langle W_\alpha : \alpha < \lambda\rangle$, $S'$, and $\langle A_\alpha : \alpha < \lambda\rangle$ witness $W^*_{\lambda}(\lambda^+)$. Clause (1) of  $W^*_{\lambda}(\lambda^+)$ is proved in Claim~\ref{claim:1}; (2) follows directly from the definition of $W_\alpha$; (3) is proved in Claim~\ref{claim:9}; (4) follows from Claims~\ref{claim:11}, \ref{claim:12}, and \ref{claim:13}.

\begin{claim}\label{claim:1}
$(T, <)$ and $\langle W_\alpha : \alpha < \lambda\rangle$ witness $W_\lambda(\lambda^+)$.
\end{claim}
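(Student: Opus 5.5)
Two things have to be checked: that $(T,<)$ is a $\lambda$-Kurepa tree with $|T_\lambda| = \lambda^+$, and that every $X \in [T_\lambda]^{<\lambda}$ eventually projects into the $W_\beta$. The first is immediate: $(T,<)$ is a $\lambda$-Kurepa tree by (Q0), and $|T_\lambda| = \lambda^+ = \kappa^{++}$ by Lemma~\ref{lemma:basicquag}(1). Each $W_\beta$ has cardinality at most $\kappa < \lambda$, since $\kappa^{<\kappa}=\kappa$. So the real content is clause (2) of Silver's principle.

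Fix $X \in [T_\lambda]^{<\lambda}$, so $|X| \le \kappa$. By Lemma~\ref{lemma:basicquag}(1), $X$ has a $\lhd$-upper bound $z \in T_\lambda$; by enlarging $z$ if necessary we may assume $x \lhd z$ for all $x \in X$. For each $x \in X$, (Q4) (with height $\lambda$) gives $\alpha_x < \lambda$ such that $\pi_{\alpha_x}(x) \lhd \pi_{\alpha_x}(z)$ and $x = Q(\pi_{\alpha_x}(x), z)$. Since $|X| \le \kappa < \lambda = \mathrm{cf}(\lambda)$, we can take $\alpha_0 = \sup_x \alpha_x$ and enlarge it so that $\pi_{\alpha_0}\upharpoonright X$ is injective (each pair of distinct branches separates below $\lambda$). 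By Lemma~\ref{lemma:basicquag}(4), $x = Q(\pi_\gamma(x), z)$ for every $\gamma \ge \alpha_0$. Let $\bar X = \pi_{\alpha_0}[X] \subseteq T_{\alpha_0}$ and choose $\xi > 0$ with $\bar X = X^{\alpha_0}_\xi$.

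Now let $\beta > \max(\alpha_0, \xi)$. I claim $\pi_\beta[X] = D(\pi_\beta(z), \alpha_0, \xi)$, which lies in $V_\beta \subseteq W_\beta$ because $\{\pi_\beta(z)\} \in [T_\beta]^{<\kappa}$ and $\alpha_0, \xi < \beta$. For ``$\subseteq$'': given $x \in X$, put $\bar y = \pi_{\alpha_0}(x) \in \bar X$, so $\bar y \lhd \pi_{\alpha_0}(z)$. By Lemma~\ref{lemma:basicquag}(3),
\[
Q(\bar y, \pi_\beta(z)) = \pi_\beta(Q(\bar y, z)) = \pi_\beta(x).
\]
For ``$\supseteq$'': an element $Q(\bar y, \pi_\beta(z))$ with $\bar y \in \bar X$ and $\bar y \trianglelefteq \pi_{\alpha_0}(z)$ has $\bar y = \pi_{\alpha_0}(x)$ for some $x \in X$, and the same computation shows it equals $\pi_\beta(x)$. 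Hence the threshold $\max(\alpha_0,\xi)+1$ witnesses clause (2).

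The main obstacle I expect is bookkeeping: the index $\xi$ may exceed $\alpha_0$. The fix is to take $\beta > \xi$, which costs nothing since $\xi < \lambda$. One should also check that $\pi_\beta(z)$ is a legitimate second coordinate of $D$. This uses the extended convention for $Q$ and the fact that $\mathrm{ht}(\bar y) = \alpha_0 < \beta$ with $\bar y \lhd \pi_{\alpha_0}(\pi_\beta(z))$, so $(\bar y, \pi_\beta(z)) \in \mathrm{dom}(Q)$.
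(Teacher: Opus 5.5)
Your proposal is correct and follows essentially the same route as the paper: take a $\lhd$-upper bound via Lemma~\ref{lemma:basicquag}(1), take the supremum of the (Q4)-levels, code $\pi_{\alpha_0}[X]$ as $X^{\alpha_0}_\xi$, and verify $\pi_\beta[X] = D(\pi_\beta(z), \alpha_0, \xi)$ for $\beta > \max(\alpha_0,\xi)$ using Lemma~\ref{lemma:basicquag}(3),(4). Your extra step of enlarging $\alpha_0$ to make $\pi_{\alpha_0}\upharpoonright X$ injective is harmless but unnecessary. Silver's principle does not demand injectivity, and it already follows from $x = Q(\pi_{\alpha_0}(x), z)$ for all $x \in X$.
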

\begin{proof}
The argument can be found in \cite[\S 3]{MR555420} and \cite[Theorem 4]{MR823780}. We have already noted that $W_\alpha$ is a subset of $\mathcal{P}(T_\alpha)$ of cardinality at most $\kappa$. So let us pick some $A \in [T_{\lambda}]^{\le \kappa}$. By Lemma~\ref{lemma:basicquag}(1), $A$ has an $\lhd$-upper bound $\bar{b}$. By (Q4), for every $b \in A$, there exists $\gamma_b < \lambda$ such that $\pi_{\gamma_b}(b) \lhd \pi_{\gamma_b}(\bar{b})$ and $b = Q(\pi_{\gamma_b}(b), \bar{b})$. Let 
\[
\gamma = \sup_{b \in A} \gamma_b.
\]
 As $A$ has cardinality less than $\lambda$, $\gamma < \lambda$.

Let $0 < \xi < \lambda$ be such that $\pi_\gamma[A] = X^\gamma_\xi$. We are done once we show that $\pi_\beta[A] = D(\pi_\beta(\bar{b}), \gamma, \xi)$ for every $\beta$ with $\max(\gamma, \xi) < \beta < \lambda$. So pick one such $\beta$.

For every $b \in A$,
\begin{equation}\label{eq:start}
\begin{split}
Q(\pi_\gamma(b), \pi_\beta(\bar{b})) &= \pi_\beta(Q(\pi_\gamma(b), \bar{b}))\\&= \pi_\beta(b),
\end{split}
\end{equation}
where the first equality follows from Lemma~\ref{lemma:basicquag}(3), while the second follows from the choice of $\gamma$ and Lemma~\ref{lemma:basicquag}(4).

Now pick $x \in D(\pi_\beta(\bar{b}), \gamma, \xi)$. By the definition of $D$ and the choice of $\xi$, there exists $b \in A$ such that $\pi_\gamma(b) = \pi_\gamma(x)$ and $x = Q(\pi_\gamma(x), \pi_\beta(\bar{b}))$. So, by \eqref{eq:start}, $x = Q(\pi_\gamma(b), \pi_\beta(\bar{b})) = \pi_\beta(b)$, which means $x \in \pi_\beta[A]$.

Conversely, if $b \in A$, then, by \eqref{eq:start}, $\pi_\beta(b) = Q(\pi_\gamma(b), \pi_\beta(\bar{b}))$, which clearly implies $\pi_\beta(b) \in D(\pi_\beta(\bar{b}), \gamma, \xi)$. 

Thus, we have shown $\pi_\beta[A] = D(\pi_\beta(\bar{b}), \gamma, \xi) \in V_\beta \subseteq W_\beta$. Since this holds for every $\beta$ with $\max(\gamma, \xi) < \beta < \lambda$, the claim follows.
\end{proof}

\begin{claim}\label{claim:2}
For every  $\alpha \in S$, every element of $V_\alpha$ is captured below $\alpha$.
\end{claim}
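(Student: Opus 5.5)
The plan is to show that for $\alpha \in S$, $H \in [T_\alpha]^{<\kappa}$ and $\beta,\xi < \alpha$, the set $D = D(H,\beta,\xi)$ is captured at some $\gamma_0 < \alpha$. The argument has two parts: membership of the projections in $W_\gamma$ follows directly from Lemma~\ref{lemma:basicquag}(3), while injectivity of $\pi_\gamma \upharpoonright D$ uses clause (Q4) at $\alpha \in S$ to obtain a bound $\gamma_0 < \alpha$.

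Throughout I use that $\kappa$ is regular, which follows from $\kappa^{<\kappa}=\kappa$. Since $S \subseteq E^\lambda_\kappa$, the ordinal $\alpha$ has cofinality $\kappa$.

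\emph{Choosing $\gamma_0$.} For each pair $x \lhd x'$ in $H$, apply (Q4) (using $\mathrm{ht}(x)=\alpha\in S$). This gives $\alpha_{x,x'} < \alpha$ with $\pi_{\alpha_{x,x'}}(x) \lhd \pi_{\alpha_{x,x'}}(x')$ and $x = Q(\pi_{\alpha_{x,x'}}(x), x')$. There are fewer than $\kappa$ such pairs and $\mathrm{cf}(\alpha) = \kappa$, so
\[
\gamma_0 = \max\big(\beta+1,\ \xi+1,\ \sup\{\alpha_{x,x'} : x \lhd x' \text{ in } H\}\big) < \alpha .
\]
By Lemma~\ref{lemma:basicquag}(4), for every $\gamma$ with $\gamma_0 \le \gamma < \alpha$ and every pair $x \lhd x'$ in $H$, we still have $x = Q(\pi_\gamma(x), x')$ and $\pi_\gamma(x) \lhd \pi_\gamma(x')$.

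\emph{Membership in $W_\gamma$.} Fix $\gamma$ with $\gamma_0 \le \gamma < \alpha$. For $x \in H$ and $\bar y \trianglelefteq \pi_\beta(x)$, Lemma~\ref{lemma:basicquag}(3) gives $\pi_\gamma(Q(\bar y, x)) = Q(\bar y, \pi_\gamma(x))$; the degenerate case $\bar y = \pi_\beta(x)$ is immediate. Hence
\[
\pi_\gamma[D(H,\beta,\xi)] = D(\pi_\gamma[H],\beta,\xi).
\]
Since $\pi_\gamma[H] \in [T_\gamma]^{<\kappa}$ and $\beta,\xi < \gamma$, the right-hand side lies in $V_\gamma \subseteq W_\gamma$.

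\emph{Injectivity.} This is the step I expect to be the main obstacle, because $D$ can have size $\kappa$. Separating pairs of elements of $D$ one at a time could therefore force $\gamma$ cofinally up to $\alpha$. The way around this is to separate only pairs from $H$, which number fewer than $\kappa$, and to let the quagmire axioms do the rest.

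Suppose $y \in D(x,\beta,\xi)$ and $y' \in D(x',\beta,\xi)$ with $x,x' \in H$ and $\pi_\gamma(y) = \pi_\gamma(y') =: w$. By the observation following the definition of $D$, $y = Q(\pi_\beta(y), x)$. Lemma~\ref{lemma:basicquag}(4) then upgrades this to $y = Q(w, x)$, and likewise $y' = Q(w, x')$. Moreover $w = Q(\pi_\beta(y), \pi_\gamma(x)) \trianglelefteq \pi_\gamma(x)$ by (Q1).

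There are two cases.
\begin{enumerate}
\item If $x = x'$, then $y = Q(w,x) = y'$ directly.
\item If, say, $x \lhd x'$, then by the choice of $\gamma_0$ and (Q3) (or trivially when $w = \pi_\gamma(x)$),
\[
y = Q(w, x) = Q\big(w, Q(\pi_\gamma(x), x')\big) = Q(w, x') = y'.
\]
\end{enumerate}
In both cases $y = y'$, so $\pi_\gamma \upharpoonright D$ is injective for every $\gamma \in [\gamma_0, \alpha)$. Together with the membership step, this shows that $D$ is captured at $\gamma_0$, hence captured below $\alpha$.
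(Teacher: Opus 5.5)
Your proposal is correct and follows essentially the same route as the paper. You bound the (Q4)-witnesses for the fewer than $\kappa$ pairs from $H$ below $\alpha$ using $\mathrm{cf}(\alpha)=\kappa$, obtain $\pi_\gamma[D(H,\beta,\xi)]=D(\pi_\gamma[H],\beta,\xi)\in V_\gamma$ from Lemma~\ref{lemma:basicquag}(3), and prove injectivity with the same chain of equalities via Lemma~\ref{lemma:basicquag}(4) and (Q3); beyond the paper, you only make explicit the degenerate case $w=\pi_\gamma(x)$ and the persistence of $\pi_\gamma(x)\lhd\pi_\gamma(x')$, which the paper uses implicitly.
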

\begin{proof}
Fix $\alpha \in S$,  $H \in [T_\alpha]^{<\kappa}$ and $\beta, \xi < \alpha$. By (Q4), for every $x,y \in H$ with $x \lhd y$ there exists $\gamma_{xy} < \alpha$ such that $x = Q(\pi_{\gamma_{xy}}(x), y)$. If $H$ has at most one element, let $\gamma = 0$, otherwise let 
\[
\gamma = \sup\{\gamma_{xy} : x,y \in H \text{ and } x \lhd y\}.
\]
Since $\alpha$ has cofinality $\kappa$, we have $\gamma < \alpha$. We show that  $D(H, \beta, \xi)$ is captured at $\max(\beta,\gamma, \xi) +1 $. Pick any $\delta$ with $\max(\beta, \gamma, \xi) < \delta < \alpha$. 

By definition, $D(\pi_\delta[H], \beta, \xi) \in V_\delta \subseteq W_\delta$. It suffices to prove that $\pi_{\delta}[D(H, \beta, \xi)] = D(\pi_\delta[H], \beta, \xi)$ and that $\pi_{\delta} \upharpoonright D(H, \beta, \xi)$ is injective.

By Lemma~\ref{lemma:basicquag}(3), for every $x \in H$ and $\bar{y} \trianglelefteq \pi_\beta(x)$ we have $\pi_\delta(Q(\bar{y}, x)) = Q(\bar{y}, \pi_\delta(x))$. Hence, $\pi_{\delta}[D(H, \beta, \xi)] = D(\pi_\delta[H], \beta, \xi)$. 

Now we prove that $\pi_{\delta} \upharpoonright D(H, \beta, \xi)$ is injective. Pick $z_0, z_1 \in D(H, \beta, \xi)$ and assume that $\pi_\delta(z_0) = \pi_\delta(z_1)$. Choose $x_0, x_1 \in H$ such that $z_i = Q(\pi_\beta(z_i), x_i)$ for $i = 0,1$. If $x_0 = x_1$, then $z_0 = z_1$ follows directly. So assume otherwise, say $x_0 \lhd x_1$. Then the following holds
\begin{align*}
z_0 &= Q(\pi_\beta(z_0), x_0)\\&= Q(\pi_\delta(z_0), x_0) \\&= Q(\pi_\delta(z_0), Q(\pi_\delta(x_0), x_1)) \\&= Q(\pi_\delta(z_0), x_1) \\&= Q(\pi_\delta(z_1), x_1) = z_1,
\end{align*}
where the second equality follows from Lemma~\ref{lemma:basicquag}(4), the third one from $\gamma_{x_0 x_1} \le \gamma$ and Lemma~\ref{lemma:basicquag}(4), the fourth one from (Q3), the fifth one from $\pi_\delta(z_0) = \pi_\delta(z_1)$, and the last one from Lemma~\ref{lemma:basicquag}(4). This proves that $\pi_{\delta} \upharpoonright D(H, \beta, \xi)$ is injective.
\end{proof}

\begin{claim}\label{claim:3}
For every $\alpha \in S'$, $\{ \{x\} : x \in T_\alpha\} \subseteq V_\alpha$.
\end{claim}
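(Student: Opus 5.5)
Given $\alpha \in S'$ and $x \in T_\alpha$, the plan is to write $\{x\}$ directly as $D(x, \beta, \xi)$ for suitable $\beta, \xi < \alpha$. Recall the observation after the definition of $D$: for $y \in T_\alpha$, we have $y \in D(x,\beta,\xi)$ if and only if $\pi_\beta(y) \trianglelefteq \pi_\beta(x)$, $\pi_\beta(y) \in X^\beta_\xi$, and $y = Q(\pi_\beta(y), x)$. So it suffices to find $\beta < \alpha$ and $0 < \xi < \alpha$ with $X^\beta_\xi = \{\pi_\beta(x)\}$. Then $y \in D(x,\beta,\xi)$ forces $\pi_\beta(y) = \pi_\beta(x)$. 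With the extended $Q$ and $\pi_\beta(x) < x$, this gives $y = Q(\pi_\beta(x), x) = x$. Conversely, $x$ itself belongs to $D(x,\beta,\xi)$. Hence $D(x,\beta,\xi) = \{x\}$, and taking $H = \{x\}$ shows $\{x\} \in V_\alpha$.

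To find $\beta$ and $\xi$, use the witness $N$ for $\alpha \in S'$. Fix any $\beta < \alpha$, for instance $\beta = 0$. Since $T \upharpoonright \alpha = (T\upharpoonright\lambda) \cap N$ (noted right after the definition of $S'$), the node $\bar x = \pi_\beta(x)$ lies in $N$. Moreover, $\beta \in N$ and $\langle X^\beta_\gamma : \beta,\gamma<\lambda\rangle \in N$, so by elementarity the least $\xi$ with $0<\xi<\lambda$ and $X^\beta_\xi = \{\bar x\}$ is definable in $H(\lambda^{++})$ from parameters in $N$. Such a $\xi$ exists because $\langle X^\beta_\xi : 0<\xi<\lambda\rangle$ enumerates $\mathcal P(T_\beta)$. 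Therefore $\xi \in N \cap \lambda = \alpha$. Together with $\beta < \alpha$, this gives $\beta, \xi < \alpha$, as required by the definition of $V_\alpha$.

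The only delicate point is this elementarity step: we must ensure that $\pi_\beta(x)$ is genuinely in $N$ and that $\xi$ is the index of a singleton lying below $\alpha$. Everything else is immediate from (Q1) and the definition of $D$. Note that the argument uses $\alpha \in S'$ only through the existence of $N$; the stationarity of $S$ and the quagmire axiom (Q4) are not needed here.
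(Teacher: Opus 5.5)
Your proposal is correct and follows the paper's proof: you fix $\beta<\alpha$, use elementarity of the witness $N$ to obtain $0<\xi<\alpha$ with $X^\beta_\xi=\{\pi_\beta(x)\}$, and conclude $\{x\}=D(x,\beta,\xi)\in V_\alpha$. You also spell out details the paper leaves implicit: why $\pi_\beta(x)\in N$, why the chosen index $\xi$ lies below $\alpha$, and why $D(x,\beta,\xi)$ is exactly $\{x\}$ via the extended $Q$.
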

\begin{proof}
Let $N$ witness $\alpha \in S'$. Pick $x \in T_\alpha$ and choose $\beta < \alpha$. By elementarity of $N$ there is $0 < \xi < \alpha$ such that $\{\pi_\beta(x)\} = X^\beta_\xi.$
 Hence, $\{x\} = D(x, \beta, \xi) \in V_\alpha$.
\end{proof}

The next claim shows that $V_\alpha$ is a $\kappa$-complete generalized Boolean algebra.

\begin{claim}\label{claim:4}
For every $\alpha \in S'$, $V_\alpha$ is closed under ${<} \kappa$-unions and differences.
\end{claim}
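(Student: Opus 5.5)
The plan is to show that, once we pass to a sufficiently high level $\delta<\alpha$, every element of $V_\alpha$ built from a fixed small set $H$ of nodes is the image of a subset of $T_\delta$ under a single injective ``lifting map''. Unions and differences can then be computed at level $\delta$ and pulled back up. Fix $\alpha\in S'$ with witness $N$, and recall that $\alpha\in S\subseteq E^\lambda_\kappa$, so $\mathrm{cf}(\alpha)=\kappa$.

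\emph{Normalizing the level.} Let $\langle D(H_i,\beta_i,\xi_i): i<\eta\rangle$ with $\eta<\kappa$ be given, and put $H=\bigcup_{i<\eta}H_i\in[T_\alpha]^{<\kappa}$. As in the proof of Claim~\ref{claim:2}, choose $\gamma<\alpha$ such that $x=Q(\pi_\gamma(x),y)$ whenever $x\lhd y$ are in $H$. Then pick $\delta<\alpha$ above $\gamma$ and above all $\beta_i,\xi_i$; this is possible because $\mathrm{cf}(\alpha)=\kappa>\eta$. Let $P=\{\bar y\in T_\delta:\bar y\trianglelefteq\pi_\delta(x)\text{ for some }x\in H\}$. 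Define $L\colon P\to T_\alpha$ by $L(\bar y)=Q(\bar y,x)$ for any $x\in H$ with $\bar y\trianglelefteq\pi_\delta(x)$.

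\emph{$L$ is well defined and injective.} Suppose $x'\lhd x$ are in $H$ and $\bar y\trianglelefteq\pi_\delta(x')$. By Lemma~\ref{lemma:basicquag}(4) we have $x'=Q(\pi_\delta(x'),x)$. Then (Q3), together with our extension convention when $\bar y=\pi_\delta(x')$, gives $Q(\bar y,x')=Q(\bar y,Q(\pi_\delta(x'),x))=Q(\bar y,x)$. So $L$ does not depend on the choice of $x$. Since $\pi_\delta(L(\bar y))=\bar y$ by (Q1), the map $L$ is injective.

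\emph{Every relevant set is an $L$-image.} Next I check that $D(H_i,\beta_i,\xi_i)=L[Z_i]$, where
\[
Z_i=D(\pi_\delta[H_i],\beta_i,\xi_i)\subseteq P .
\]
Take $y\in D(x,\beta_i,\xi_i)$ with $x\in H_i$. By Lemma~\ref{lemma:basicquag}(3) and (4), $y=Q(\pi_\delta(y),x)$ and $\pi_\delta(y)=Q(\pi_{\beta_i}(y),\pi_\delta(x))\in D(\pi_\delta(x),\beta_i,\xi_i)$. Conversely, if $\bar y\in D(\pi_\delta(x),\beta_i,\xi_i)$, then $\bar y\trianglelefteq\pi_\delta(x)$, and by (Q2) we get $Q(\bar y,x)=Q(\pi_{\beta_i}(\bar y),x)\in D(x,\beta_i,\xi_i)$. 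Hence $D(H_i,\beta_i,\xi_i)=L[Z_i]$.

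\emph{Each subset of $P$ gives an element of $V_\alpha$.} Every $Z\subseteq P$ has the form $X^\delta_\zeta\cap P$ for some $0<\zeta<\alpha$, and then $L[Z]=D(H,\delta,\zeta)\in V_\alpha$. To see this, note that $P\subseteq T_\delta\subseteq N$ and $|P|<\lambda$; what matters is that $Z$ is coded by $<\kappa$ many elements of $N$. This holds because $H$ has size $<\kappa$ and each $\pi_\delta(x)$ has a $\lhd$-downset of size $\le\kappa$ lying in $N$. So I would either use ${}^{<\kappa}N\subseteq N$ directly to put $Z\in N$, or, more safely, take $Z\in N$ and intersect with $P$. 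Elementarity with respect to the enumeration $\langle X^\delta_\xi\rangle\in N$ then gives an index $\zeta\in N\cap\lambda=\alpha$.

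\emph{Conclusion and the main obstacle.} Given the previous steps, the union of the $D(H_i,\beta_i,\xi_i)$ equals $L[\bigcup_i Z_i]$. For two sets, $D_1\setminus D_2=L[Z_1\setminus Z_2]$ by injectivity of $L$. Both right-hand sides lie in $V_\alpha$, which proves the claim.

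The main obstacle is exactly this last membership step. $\bigcup_i Z_i$ and $Z_1\setminus Z_2$ are subsets of $T_\delta$ that are definable from $<\kappa$ many parameters in $N$: the $\pi_\delta$-projections of $H$, together with $\beta_i$, $\xi_i$, $\delta$. Closure of $N$ under ${<}\kappa$-sequences puts these parameters, and hence the sets, in $N$, so they receive indices below $\alpha$. I expect care to be needed here to make sure the index is nonzero and below $\alpha$, which is where the assumption $\alpha\in S'$ is used.
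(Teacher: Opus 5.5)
Your argument is correct and is essentially the paper's proof. It uses the same level $\delta$ obtained from (Q4) and $\mathrm{cf}(\alpha)=\kappa$, and the same projected sets $Z_i=D(\pi_\delta[H_i],\beta_i,\xi_i)$, placed in $N$ via ${}^{<\kappa}N\subseteq N$ and indexed as $X^\delta_\zeta$. It also relies on the same (Q2)/(Q3)/Lemma~\ref{lemma:basicquag} computations; your injective lifting map $L$ is a tidy repackaging that makes the difference case, which the paper only calls analogous, immediate.

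One assertion needs fixing: it is false that \emph{every} $Z\subseteq P$ equals $X^\delta_\zeta\cap P$ for some $\zeta<\alpha$ when $|P|=\kappa$, since only $\kappa$ many such sets exist. You only need this for $\bigcup_i Z_i$ and $Z_1\setminus Z_2$. These do lie in $N$, as your last paragraph argues, using $\langle X^\beta_\xi:\beta,\xi<\lambda\rangle\in N$ (including $\xi=0$) from $\alpha\in S'$.
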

\begin{proof}

Fix $\alpha \in S'$ and let $N$ witness this. We now show that  $V_\alpha$ is closed under ${<}\kappa$-unions. The proof of the closure under differences is analogous. Fix some $\eta < \kappa$ and, for every $\iota < \eta$, fix $H_\iota \in [T_\alpha]^{<\kappa}$ and $\beta_\iota, \xi_\iota < \alpha$, towards showing 
\[
\bigcup_{\iota < \eta} D(H_\iota, \beta_\iota, \xi_\iota) \in V_\alpha.
\]

First let $H = \bigcup_{\iota < \eta} H_\iota$. By (Q4), for every $x,y \in H$ with $x \lhd y$, there exists $\gamma_{xy} < \alpha$ such that $\pi_{\gamma_{xy}}(x) \lhd \pi_{\gamma_{xy}}(y)$ and $x = Q(\pi_{\gamma_{xy}}(x), y)$. If $H$ contains at most one element, let $\gamma = 0$, otherwise let 
\[
\gamma = \sup\{\gamma_{xy} : x, y \in H \text{ and } x \lhd y\}.
\]
Let 
\[
\delta = \max\left(\gamma, \sup_{\iota < \eta} \beta_\iota\right) + 1.
\]
 Both $\gamma$ and $\delta$ are $< \alpha$.

By the elementarity of $N$ and its closure under ${<}\kappa$-sequences,  $D(\pi_\delta[H_\iota], \beta_\iota, \xi_\iota)$ belongs to $N$ for each $\iota < \eta$. Hence, the union $\bigcup_{\iota < \eta} D(\pi_\delta[H_\iota], \beta_\iota, \xi_\iota)$ also belongs to $N$. Thus, there exists $0 < \zeta < \alpha$ such that 
\[
X^\delta_\zeta = \bigcup_{\iota < \eta} D(\pi_\delta[H_\iota], \beta_\iota, \xi_\iota).
\]
It suffices to show:
\begin{equation}\label{eq:claim4-0}
D(H, \delta, \zeta) = \bigcup_{\iota < \eta} D(H_\iota, \beta_\iota, \xi_\iota).
\end{equation}

Showing the inclusion $\supseteq$ is easy. Pick $z \in D(x, \beta_\iota, \xi_\iota )$ for some $x \in H_\iota$ and some $\iota < \eta$. Then,
\begin{align*}
z &= Q(\pi_{\beta_\iota}(z), x) \\&= Q(Q(\pi_{\beta_\iota}(z), \pi_\delta(x)), x),
\end{align*}
where the second equality follows from (Q2). Since $Q(\pi_{\beta_\iota}(z), \pi_\delta(x))$ belongs to $X^\delta_\zeta$, we conclude that $z \in D(H, \delta, \zeta)$. So we are left to prove the other inclusion.

Pick $z \in D(H, \delta, \zeta)$. Let $\iota < \eta$ and $x \in H$ be such that $\pi_\delta(z) \in D(\pi_\delta[H_\iota], \beta_\iota, \xi_\iota)$ and $z = Q(\pi_\delta(z), x)$. Thus, $\pi_{\beta_\iota}(z) \in X^{\beta_\iota}_{\xi_\iota}$ and we can pick $y \in H_\iota$ such that $\pi_\delta(z) = Q(\pi_{\beta_\iota}(z), \pi_\delta(y))$. The following holds:

\begin{equation}\label{eq:claim4-1}
\begin{split}
z &= Q(\pi_\delta(z), x)\\ &= Q(Q(\pi_{\beta_\iota}(z), \pi_\delta(y)), x)\\
&= Q(Q(\pi_{\beta_\iota}(z), \pi_\delta(y)), y)\\
&= Q(\pi_{\beta_\iota}(z), y).
\end{split}
\end{equation}
The second equality follows from the choice of $\iota$ and $y$. Now we justify the third equality. If $x = y$, then it holds trivially. So we can suppose either $x \lhd y$ or $y \lhd x$. Since $\gamma \le \delta$, we conclude by definition of $\gamma$ and by Lemma~\ref{lemma:basicquag}(4) that either $x = Q(\pi_\delta(x), y)$ or $y = Q(\pi_\delta(y),x)$. In either case, the third equality holds by (Q3). Finally, the last equality follows from (Q2). 

By \eqref{eq:claim4-1}, we conclude that $z \in D(H_\iota, \beta_\iota, \xi_\iota)$ as we wanted to show. Overall, \eqref{eq:claim4-0} holds and we are done.
\end{proof}

The next key claim describes how $V_\alpha$ interacts with the witnesses $N, Y$ to $(\dagger)$ and, in particular, shows that $V_\alpha$ is closed under intersections with $X^\alpha_0$.

\begin{claim}\label{claim:5}
Let $\alpha \in S'$ and $A \in V_\alpha$. If $\alpha$ satisfies $(\dagger)$ with witnesses $N, Y$, then:
\begin{enumerate}[label={\upshape (\arabic*)}]
\item $Y \cap \pi_{\alpha \lambda}^{-1}(A) \cap N \in N$,
\item $A \cap X^\alpha_0 \in V_\alpha$.
\end{enumerate}
\end{claim}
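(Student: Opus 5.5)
The plan is to reduce both parts to the behaviour of a single $\lhd$-bound $c\in T_\lambda\cap N$ of $Y$ at one fixed level $\delta<\alpha$. Write $A=D(H,\beta,\xi)$ with $H\in[T_\alpha]^{<\kappa}$ and $\beta,\xi<\alpha$, and fix witnesses $N,Y$ to $(\dagger)$.

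\emph{Choosing $c$ and $\delta$.} Since $Y\in N$ and $|Y|=\lambda$, Lemma~\ref{lemma:basicquag}(1) and elementarity give a $\lhd$-upper bound $c\in T_\lambda\cap N$ of $Y$. Let $\hat c=\pi_\alpha(c)$. For each $x\in H$ with $x\neq\hat c$, we use $\alpha\in S$ and (Q4) to get $\gamma_x<\alpha$ such that either $x=Q(\pi_{\gamma_x}(x),\hat c)$ or $\hat c=Q(\pi_{\gamma_x}(\hat c),x)$. Since $\mathrm{cf}(\alpha)=\kappa>|H|$, we can take $\delta<\alpha$ above $\beta$, $\xi$ and all $\gamma_x$. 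By Lemma~\ref{lemma:basicquag}(4), the same relations then hold with $\delta$ in place of $\gamma_x$.

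\emph{Key equivalence.} For $b\in Y\cap N$ we aim to show
\[
\pi_\alpha(b)\in A \iff \pi_\delta(b)\in D(\pi_\delta[H],\beta,\xi) \text{ and } b=Q(\pi_\delta(b),c).
\]
For ($\Rightarrow$), suppose $\pi_\alpha(b)\in D(x,\beta,\xi)$, so $\pi_\alpha(b)=Q(\pi_\beta(b),x)=Q(\pi_\delta(b),x)$ by Lemma~\ref{lemma:basicquag}(4). Composing with the relation between $x$ and $\hat c$ via (Q3), or via (Q2)/(Q3) in the other case, gives $\pi_\alpha(b)=Q(\pi_\delta(b),\hat c)$. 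Since $b\lhd c$ and both lie in $N$, the least $\gamma$ with $b=Q(\pi_\gamma(b),c)$ is in $N$, hence below $\alpha$. By Lemma~\ref{lemma:basicquag}(3), $Q(\pi_\delta(b),c)$ is then the unique node above $Q(\pi_\delta(b),\hat c)=\pi_\alpha(b)$ obtained by lifting, and it equals $b$. The membership $\pi_\delta(b)\in D(\pi_\delta[H],\beta,\xi)$ follows from Lemma~\ref{lemma:basicquag}(3) as in Claim~\ref{claim:2}. For ($\Leftarrow$), reverse the computation: $\pi_\alpha(b)=Q(\pi_\delta(b),\hat c)$ by Lemma~\ref{lemma:basicquag}(3), and this equals $Q(\pi_\delta(b),x)$ for the $x\in H$ witnessing $\pi_\delta(b)\in D(\pi_\delta(x),\beta,\xi)$, by the choice of $\delta$ and (Q3). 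Hence $\pi_\alpha(b)\in D(x,\beta,\xi)\subseteq A$.

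\emph{Part (1).} Let
\[
Z=\{b\in Y:\pi_\delta(b)\in D(\pi_\delta[H],\beta,\xi),\ b=Q(\pi_\delta(b),c)\}.
\]
Its parameters $Y$, $c$, $\delta$ and $D(\pi_\delta[H],\beta,\xi)$ all lie in $N$; the last one because $\pi_\delta[H]$ is a ${<}\kappa$-subset of $N$ and ${}^{<\kappa}N\subseteq N$. Hence $Z\in N$, and by the key equivalence $Z\cap N=Y\cap\pi_{\alpha\lambda}^{-1}(A)\cap N$. It then remains to check that $Z\subseteq N$. Any $b\in Z$ is determined as $Q(\pi_\delta(b),c)$ with $\pi_\delta(b)\in T\upharpoonright\alpha\subseteq N$, so $b\in N$. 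Therefore $Y\cap\pi_{\alpha\lambda}^{-1}(A)\cap N=Z\in N$.

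\emph{Part (2).} By $(\dagger)$(6), $A\cap X^\alpha_0=\pi_\alpha[Z]$. Since $\pi_\delta[Z]\in N$ and is a subset of $T_\delta$, elementarity gives $0<\zeta<\alpha$ with $X^\delta_\zeta=\pi_\delta[Z]$. We claim $\pi_\alpha[Z]=D(\hat c,\delta,\zeta)$, which lies in $V_\alpha$ because $\delta,\zeta<\alpha$.
\begin{itemize}
\item[$\subseteq$:] For $b\in Z$, Lemma~\ref{lemma:basicquag}(3) gives $\pi_\alpha(b)=Q(\pi_\delta(b),\hat c)$, and $\pi_\delta(b)\trianglelefteq\pi_\delta(\hat c)$ because $b\trianglelefteq c$.
\item[$\supseteq$:] If $y=Q(\pi_\delta(y),\hat c)$ with $\pi_\delta(y)=\pi_\delta(b)$ for some $b\in Z$, then $y=\pi_\alpha(Q(\pi_\delta(b),c))=\pi_\alpha(b)$.
\end{itemize}

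\emph{Main obstacle.} I expect the key equivalence to be the delicate step. It requires replacing the nodes $x\in H$, which are not in $N$, by the single bound $c\in N$, so that the $Q$-relations between $\hat c$ and each $x$ stabilize below one $\delta<\alpha$; this uses $\alpha\in S$, $\mathrm{cf}(\alpha)=\kappa$ and $|H|<\kappa$. One must also verify carefully the (Q3) bookkeeping in both orderings of $x$ and $\hat c$.
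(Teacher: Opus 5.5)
Your overall route is the paper's. You take a $\lhd$-upper bound $c\in N$ of $Y$ and fix one level $\delta<\alpha$ at which the $Q$-relation between $\hat c$ and the nodes of $H$ has stabilized, via (Q4) at $\alpha\in S$. You then identify $A\cap X^\alpha_0$ with $D(\hat c,\delta,\zeta)$. Your set $\pi_\delta[Z]$ is exactly the paper's $I\cap D(\pi_\delta(x),\beta,\xi)$. Treating all of $H$ at once, instead of reducing to singletons via Claim~\ref{claim:4}, is harmless.

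\textbf{The gap.} The ($\Rightarrow$) half of your key equivalence fails as written in the case $\hat c\lhd x$. There you know $\pi_\alpha(b)=Q(\pi_\delta(b),x)$ and $\hat c=Q(\pi_\delta(\hat c),x)$. To conclude $Q(\pi_\delta(b),\hat c)=Q(\pi_\delta(b),Q(\pi_\delta(\hat c),x))=Q(\pi_\delta(b),x)$ by (Q3), you need $\pi_\delta(b)\trianglelefteq\pi_\delta(\hat c)$, and you never establish it.

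In Part (2) you say this inequality holds ``because $b\trianglelefteq c$''. That suggests you are assuming $\lhd$ is preserved by projections, which is false in a general quagmire. From $b\lhd c$ you only get $\pi_\eta(b)\lhd\pi_\eta(c)$ for $\eta\ge\eta_b$, where $b=Q(\pi_{\eta_b}(b),c)$. This $\eta_b$ depends on $b$ and can exceed your fixed $\delta$. This already fails in the quagmire of Theorem~\ref{thm:quagmire}, where projections are not order-preserving across a split point.

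\textbf{The fix.} The missing step is exactly the subclaim inside the paper's proof.
\begin{enumerate}
\item Since $b,c\in N$, (Q4) and elementarity give some $\eta_b<\alpha$ with $b=Q(\pi_{\eta_b}(b),c)$. Hence $\pi_\alpha(b)\lhd\hat c$ by Lemma~\ref{lemma:basicquag}(4).
\item Both $\pi_\alpha(b)$ and $\hat c$ are images of $\pi_\delta(b)$ and $\pi_\delta(\hat c)$ under $Q(\cdot,x)\upharpoonright T_\delta$. That map is an order-embedding by Lemma~\ref{lemma:basicquag}(2), so $\pi_\delta(b)\lhd\pi_\delta(\hat c)$.
\item Only now does (Q3) apply.
\end{enumerate}
You already use the ingredient $\eta_b<\alpha$, but only afterwards for lifting. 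Move it before the (Q3) computation. The lifting step is then simply
$Q(\pi_\delta(b),c)=Q(Q(\pi_\delta(b),\hat c),c)=Q(\pi_\alpha(b),c)=b$,
by (Q2) and Lemma~\ref{lemma:basicquag}(4). The paper instead uses injectivity of $\pi_{\alpha\lambda}\upharpoonright N$. In Part (2) the inequality $\pi_\delta(b)\lhd\pi_\delta(\hat c)$ is true, but the reason is that $Q(\pi_\delta(b),c)$ is defined for $b\in Z$, not that $b\lhd c$.

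\textbf{A smaller omission.} To get $D(\pi_\delta[H],\beta,\xi)\in N$ you also need $X^\beta_\xi\in N$. For $\xi=0$ this is precisely clause (4) of $(\dagger)$. Closure of $N$ under ${<}\kappa$-sequences does not supply it, because $N\cap\lambda=\alpha$ means only $\kappa$ many subsets of $T_\beta$ lie in $N$.
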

\begin{proof}

Suppose that $\alpha \in S'$ satisfies $(\dagger)$ with witnesses $N, Y$, and fix $A \in V_\alpha$, towards showing that (1) and (2) hold. Since both $V_\alpha$ and $N$ are closed under ${<}\kappa$-unions, we can assume without loss of generality that $A = D(x, \beta, \xi)$ for some $x \in T_\alpha$ and $\beta, \xi < \alpha$. 

So fix $\beta, \xi < \alpha$ and $x \in T_\alpha$ and  let 
\[
K \coloneqq Y \cap \pi_{\alpha \lambda}^{-1}(D(x, \beta, \xi)) \cap N.
\] 

By Lemma~\ref{lemma:basicquag}, we know that $Y$ has a $\lhd$-upper bound. By elementarity of $N$, we can find such an upper bound in $N$. Hence, let $\bar{b} \in T_{\lambda} \cap N$ be such that $Y \subseteq \{b \in T_{\lambda} : b \lhd \bar{b}\}$.

There are three cases: either $x = \pi_\alpha(\bar{b})$ or $x \lhd \pi_\alpha(\bar{b})$ or $\pi_\alpha(\bar{b}) \lhd x$. We only consider the latter case, as the other two are analogous.  By (Q4), there exists $\gamma < \alpha$ such that 
\[
\pi_\gamma(\bar{b}) \lhd \pi_\gamma(x) \text{ and } \pi_\alpha(\bar{b}) = Q(\pi_\gamma(\bar{b}),  x).
\]
Let $\delta = \max(\beta, \gamma) + 1$ and
\begin{align*}
I &\coloneqq \big\{z \in T_\delta : z \lhd \pi_\delta (\bar{b}) \text{ and } Q(z, \bar{b}) \in Y\big\},\\X &\coloneqq I \cap D(\pi_\delta(x), \beta, \xi).
\end{align*}
Since $X$ belongs to $N$ by elementarity (using clause (4) of $(\dagger)$ if $\xi = 0$), we can fix $0 < \zeta < \alpha$ such that $X^\delta_\zeta = X$. Both (1) and (2) quickly follow once we show $K = D(\bar{b}, \delta, \zeta)$.

First pick $b \in D(\bar{b}, \delta, \zeta)$, towards showing that $b \in Y \cap N$ and $\pi_\alpha(b) \in D(x, \beta, \xi)$. By the choice of $\zeta$, we know that
\begin{enumerate}[label=(\alph*)]
\item $b = Q(\pi_\delta(b), \bar{b})$,
\item $\pi_\delta(b) \in I$,
\item $\pi_\beta(b) \in X^\beta_\xi$,
\item $\pi_\beta(b) \trianglelefteq \pi_\beta(x)$,
\item $\pi_\delta(b) = Q(\pi_\beta(b), \pi_\delta(x))$.
\end{enumerate}
It follows directly from (a) and (b) that $b \in Y$. Moreover, as $\pi_\delta(b) \in N$---indeed, $T \upharpoonright \alpha \subseteq N$---we also have $b \in N$ by (a) and the elementarity of $N$. Hence, $b \in Y \cap N$, and we are left to show that $\pi_\alpha(b) \in D(x, \beta, \xi)$. The following holds:
\begin{align*}
\pi_\alpha(b) &= \pi_\alpha(Q(\pi_\delta(b), \bar{b}))\\&= Q(\pi_\delta(b), \pi_\alpha(\bar{b}))\\&= Q(\pi_\delta(b), Q(\pi_\delta(\bar{b}), x)) \\&= Q(\pi_\delta(b), x) \\&= Q(Q(\pi_\beta(b), \pi_\delta(x)), x) \\&= Q(\pi_\beta(b), x) \in D(x, \beta, \xi).
\end{align*} 
The first equality follows from (a); the second one from Lemma~\ref{lemma:basicquag}(3); the third one from the choice of $\gamma$ and from Lemma~\ref{lemma:basicquag}(4); the fourth one from (Q3) and from $\pi_\delta(b) \lhd \pi_\delta(\bar{b})$, which holds by (b); the fifth one from (e); the last one from (Q2), and the final membership relation holds by (c) and (d).

So we have shown $D(\bar{b}, \delta, \zeta) \subseteq K$. Now let us pick $b \in Y \cap N$ such that $\pi_\alpha(b) \in D(x, \beta, \xi)$ and let us prove $b \in D(\bar{b}, \delta, \zeta)$. From our assumption we get that 
\begin{enumerate}[label=(\roman*)]
\item $\pi_\beta(b) \trianglelefteq \pi_\beta(x)$,
\item $\pi_\beta(b) \in X^\beta_\xi$,
\item $\pi_\alpha(b) = Q(\pi_\beta(b), x)$.
\end{enumerate}
Moreover, the following holds:
\begin{subclaim}\label{claim:5-1}
$\pi_\delta(b) \lhd \pi_\delta(\bar{b})$.
\end{subclaim}
\begin{proof}
Note first that $\pi_\alpha(b) \lhd \pi_\alpha(\bar{b})$. Indeed, using (Q4), the elementarity of $N$, and the fact that both $b$ and $\bar{b}$ belong to $N$, there exists $\eta < \alpha$ such that $\pi_\eta(b) \lhd \pi_\eta(\bar{b})$ and $b = Q(\pi_\eta(b), \bar{b})$. Then, $\pi_\alpha(b) \lhd \pi_\alpha(\bar{b})$ follows from Lemma~\ref{lemma:basicquag}(4). 

Now, by (iii), by the choice of $\gamma$, and by Lemma~\ref{lemma:basicquag}(4), we have that $\pi_\alpha(b) = Q(\pi_\delta(b), x)$ and $\pi_\alpha(\bar{b}) = Q(\pi_\delta(\bar{b}), x)$. In particular, since the map $Q(\cdot, x) \upharpoonright T_\delta$ is an order-embedding  (Lemma~\ref{lemma:basicquag}(2)), we get that $\pi_\delta(b) \lhd \pi_\delta(\bar{b})$, as we wanted to show.
\end{proof}

By (i)-(iii) and Lemma~\ref{lemma:basicquag}(3), we have $\pi_\delta(b) \in D(\pi_\delta(x), \beta, \xi)$. Moreover, by hypothesis, $b \in Y$ and, by Claim~\ref{claim:5-1}, $\pi_\delta(b) \lhd \pi_\delta(\bar{b})$. Hence, once we prove $b = Q(\pi_\delta(b), \bar{b})$, the fact that $b \in Y$ will imply $\pi_\delta(b) \in I$, and therefore $\pi_\delta(b) \in X = X^\delta_\zeta$. Thus, it remains to show $b = Q(\pi_\delta(b), \bar{b})$. 

As $b \in N$ by hypothesis and $\pi_{\alpha \lambda} \upharpoonright N$ is injective by elementarity of $N$, we just need to show that $\pi_\alpha(b) = \pi_\alpha(Q(\pi_\delta(b), \bar{b}))$---note that $Q(\pi_\delta(b), \bar{b})$ belongs to $N$ by elementarity, since both $\bar{b}$ and $\pi_\delta(b)$ belong to $N$. We have
\begin{align*}
\pi_\alpha(Q(\pi_\delta(b), \bar{b})) &= Q(\pi_\delta(b), \pi_\alpha(\bar{b}))\\&= Q(\pi_\delta(b), Q(\pi_\delta(\bar{b}), x)) \\&= Q(\pi_\delta(b), x) \\&= \pi_\alpha(b),
\end{align*}
where the first equality follows from Lemma~\ref{lemma:basicquag}(3); the second one from the choice of $\gamma$ and Lemma~\ref{lemma:basicquag}(4); the third one holds by (Q3) and Claim~\ref{claim:5-1}, and the last one by (iii) and Lemma~\ref{lemma:basicquag}(4). 

Overall, we have shown $K = D(\bar{b}, \delta, \zeta)$. Now, $K \in N$ clearly follows, since $D(\bar{b}, \delta, \zeta)$ belongs to $N$ by elementarity of $N$.  Moreover, it follows that $D(x, \beta, \xi) \cap X^\alpha_0 \in V_\alpha$. Indeed, the following holds:
\begin{align*}
D(x, \beta, \xi) \cap X^\alpha_0 &= D(x, \beta, \xi) \cap \pi_\alpha[Y \cap N]\\&= \pi_\alpha[K]\\&= \pi_\alpha[D(\bar{b}, \delta , \zeta)] \\&= D(\pi_\alpha(\bar{b}), \delta, \zeta) \in V_\alpha,
\end{align*} 
where the first equality holds by definition of $X^\alpha_0$; the second equality directly follows from the definition of $K$; in the third equality we used $K = D(\bar{b}, \delta, \zeta)$, and the last one holds by Lemma~\ref{lemma:basicquag}(3) and the definition of $D$. This completes the proof.
\end{proof}

Now, given an $\alpha < \lambda$, let $V^{\complement}_\alpha = \{T_\alpha \setminus A : A \in V_\alpha\}$ and $\hat{V}_\alpha = V_\alpha \cup V_\alpha^\complement$.

\begin{claim}\label{claim:6}
For every $\alpha \in S'$, 
\begin{multline*}
W_\alpha = V_\alpha \cup \big\{(A_0 \cap X^\alpha_0) \cup (A_1 \cap (T_\alpha \setminus X^\alpha_0)) : A_0, A_1 \in \hat{V}_\alpha\\ \text{and } \big(A_0 \in V_\alpha^\complement \text{ or } A_1 \in V_\alpha^\complement\big)\big\}.
\end{multline*}
\end{claim}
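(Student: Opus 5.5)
Write $R$ for the right-hand side of the displayed equation. We show $R \subseteq W_\alpha$, and then that $R$ is a $\kappa$-complete Boolean subalgebra of $\mathcal{P}(T_\alpha)$ containing the generators $V_\alpha \cup \{X^\alpha_0\} \cup \{\{x\} : x \in T_\alpha\}$. Minimality of $W_\alpha$ then gives $W_\alpha \subseteq R$.

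The inclusion $R \subseteq W_\alpha$ is immediate, since every set in $R$ is a Boolean combination of members of $V_\alpha$ and $X^\alpha_0$. For the generators: singletons lie in $V_\alpha$ by Claim~\ref{claim:3}. Taking $A_0 = T_\alpha$, which is in $V_\alpha^\complement$ because $\emptyset = D(x,\beta,\xi) \in V_\alpha$ for a suitable $\xi$ coding the empty set, and $A_1 = \emptyset$ shows $X^\alpha_0 \in R$.

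Before checking closure, we record the structure of $V_\alpha$. By Claim~\ref{claim:4}, $V_\alpha$ is closed under ${<}\kappa$-unions and differences, and it contains $\emptyset$. So $\hat V_\alpha$ is closed under complements and under finite intersections and unions:
\begin{itemize}
\item $A \cap B = A \setminus (A\setminus B)$ for $A, B \in V_\alpha$;
\item $A \cap B^\complement = A \setminus B \in V_\alpha$;
\item $A^\complement \cap B^\complement = (A\cup B)^\complement$.
\end{itemize}
Moreover, ${<}\kappa$-intersections of sets in $V_\alpha$ stay in $V_\alpha$: write $\bigcap_\iota A_\iota = A_0 \setminus \bigcup_\iota (A_0\setminus A_\iota)$.

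Two cases depend on whether $\alpha$ satisfies $(\dagger)$.
\begin{itemize}
\item If $\alpha$ fails $(\dagger)$, then $X^\alpha_0=\emptyset$. Then $R = V_\alpha \cup V_\alpha^\complement$, since the second family reduces to $A_1$ with $A_1$ arbitrary in $\hat V_\alpha$ when $A_0 \in V^\complement_\alpha$. This is a $\kappa$-complete algebra: a ${<}\kappa$-union of sets from $\hat V_\alpha$ is a union $U$ of $V_\alpha$-sets together with complements $B_\iota^\complement$. If no complements occur it lies in $V_\alpha$; otherwise it equals $(\bigcap_\iota B_\iota \setminus U)^\complement$, which is in $V_\alpha^\complement$.
\item If $\alpha$ satisfies $(\dagger)$, the key input is Claim~\ref{claim:5}(2): $A\cap X^\alpha_0 \in V_\alpha$ for all $A\in V_\alpha$.
\end{itemize}

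Each element of $W_\alpha$ then has a normal form $(A_0\cap X^\alpha_0)\cup(A_1\setminus X^\alpha_0)$ with $A_0,A_1\in\hat V_\alpha$. Complementation and ${<}\kappa$-unions act coordinatewise on the pair $(A_0,A_1)$. So the family of all such sets is exactly the algebra generated: it is closed under complements by coordinatewise complement, and under ${<}\kappa$-unions by coordinatewise union inside $\hat V_\alpha$, which the previous step shows is again in $\hat V_\alpha$.

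It remains to see that when $A_0, A_1 \in V_\alpha$ the set already lies in $V_\alpha$. Here
\[
A_1 \setminus X^\alpha_0 = A_1 \setminus (A_1\cap X^\alpha_0)\in V_\alpha \quad\text{and}\quad A_0\cap X^\alpha_0\in V_\alpha
\]
by Claim~\ref{claim:5}(2) and Claim~\ref{claim:4}, so their union is in $V_\alpha$. Conversely, each $A \in V_\alpha$ has the form with $A_0=A_1=A$. This yields exactly the stated description: either the set is in $V_\alpha$, or at least one $A_i$ is in $V_\alpha^\complement$.

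The main obstacle is the closure of $\hat V_\alpha$ under ${<}\kappa$-unions mixing $V_\alpha$-sets and complements. It is handled by the intersection trick above, which needs $V_\alpha$ closed under ${<}\kappa$-intersections, obtained from ${<}\kappa$-unions and differences as shown. Everything else is bookkeeping.
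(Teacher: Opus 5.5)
Your proof is correct and follows essentially the same route as the paper. Claim~\ref{claim:4} makes $\hat{V}_\alpha$ a $\kappa$-complete Boolean algebra (you spell out the ${<}\kappa$-intersection trick that the paper leaves as ``quickly follows''), and Claim~\ref{claim:3} handles the singletons, so $W_\alpha$ consists exactly of the sets $(A_0 \cap X^\alpha_0) \cup (A_1 \setminus X^\alpha_0)$ with $A_0, A_1 \in \hat{V}_\alpha$. Claim~\ref{claim:5}(2) then folds the case $A_0, A_1 \in V_\alpha$ back into $V_\alpha$, and in the non-$(\dagger)$ case this step is trivial because $X^\alpha_0 = \emptyset$.
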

\begin{proof}

It quickly follows from Claim~\ref{claim:4} that $\hat{V}_\alpha$ is a $\kappa$-complete Boolean subalgebra of $\mathcal{P}(T_\alpha)$. This fact, together with Claim~\ref{claim:3}, implies that $W_\alpha$ is the Boolean subalgebra of $\mathcal{P}(T_\alpha)$ generated by $\hat{V}_\alpha \cup \{X^\alpha_0\}$. Hence
\begin{equation}\label{eq:closure}
W_\alpha = \{(A_0 \cap X^\alpha_0) \cup (A_1 \cap (T_\alpha \setminus X^\alpha_0)): A_0, A_1 \in \hat{V}_\alpha\}.
\end{equation}

If $\alpha$ does not satisfy $(\dagger)$, then $X^\alpha_0 = \emptyset$, and thus our claim holds trivially---in particular, $W_\alpha = \hat{V}_\alpha$. Hence we can assume that $\alpha$ satisfies $(\dagger)$. By \eqref{eq:closure} and by the closure of $V_\alpha$ under finite unions and differences, it suffices to prove that $A \cap X^\alpha_0 \in V_\alpha$ for every $A \in V_\alpha$. But this holds by Claim~\ref{claim:5}(2).
\end{proof}

For every $\alpha < \lambda$, let $R_\alpha$ denote the set $\{A \in W_\alpha : A \text{ is captured below } \alpha\}$.

\begin{claim}\label{claim:7}
Let $\alpha \in S'$. Then the following hold:
\begin{itemize}
\item if $\alpha$ satisfies $(\dagger)$, then $A \not\in R_\alpha$ for every $A \subseteq T_\alpha$ such that, for some $B \in V_\alpha$, either $A \supseteq X^\alpha_0 \setminus B$ or $A \supseteq T_\alpha \setminus (X^\alpha_0 \cup B)$.
\item otherwise, $A \not\in R_\alpha$ for every $A \subseteq T_\alpha$ such that $A \supseteq T_\alpha \setminus B$ for some $B \in V_\alpha$.
\end{itemize}
\end{claim}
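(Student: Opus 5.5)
The idea is to exhibit, inside any such $A$, two distinct nodes with the same projection to an arbitrarily high level below $\alpha$. Such a pair contradicts the injectivity half of being captured.

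\textbf{Reduction.} Suppose towards a contradiction that $A \in R_\alpha$, so $A$ is captured at some $\gamma < \alpha$. In particular $\pi_\gamma \upharpoonright A$ is injective. So it suffices to find a node $z \in T_\gamma$ with two distinct elements of $A$ above it.

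Write the given $B \in V_\alpha$ as a ${<}\kappa$-union of sets $D(H_\iota,\beta_\iota,\xi_\iota)$. By Claim~\ref{claim:4} we may take $B = D(H,\beta,\xi)$ with $H \in [T_\alpha]^{<\kappa}$, and enlarging $\gamma$ we may assume $\gamma > \beta$.

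\textbf{Step 1: $B$ is thin above each node of $T_\gamma$.} Fix $z \in T_\gamma$ and $x' \in H$. By the observation following the definition of $D$, any $y \in D(x',\beta,\xi)$ equals $Q(\pi_\beta(y),x')$. Hence $y$ is determined by $\pi_\beta(y)$, which is determined by $\pi_\gamma(y)$. So at most one element of $D(x',\beta,\xi)$ lies above $z$, and therefore
\[
|\{y \in B : \pi_\gamma(y) = z\}| \le |H| < \kappa .
\]
It thus suffices to find some $z \in T_\gamma$ with at least $\kappa$ elements above it in the relevant "large" set. That large set is $T_\alpha$ in the second bullet, $X^\alpha_0$ or $T_\alpha \setminus X^\alpha_0$ in the first. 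Removing fewer than $\kappa$ elements of $B$ then leaves two distinct elements of $A$ above $z$.

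\textbf{Step 2: producing $\kappa$ many nodes via $N$.} Let $N$ witness $\alpha \in S'$; when $(\dagger)$ holds, take the witnesses $N, Y$ of $(\dagger)$, noting that this $N$ also satisfies (a)--(c). Two facts about $N$ will be used.
\begin{itemize}
\item Since $T \upharpoonright \alpha \subseteq N$, we have $\gamma, T_\gamma \in N$.
\item The map $\pi_{\alpha\lambda} \upharpoonright (N \cap T_\lambda)$ is injective. Indeed, two distinct branches in $N$ split at a level in $N \cap \lambda = \alpha$.
\end{itemize}
Now choose the set of branches according to the case.
\begin{itemize}
\item \emph{Second bullet.} Since $|T_\gamma| \le \kappa < \lambda^+ = |T_\lambda|$, pigeonhole gives some $z \in T_\gamma$ with $\lambda^+$ branches through it. By elementarity we find such a $z$ and a set $E \in N$ of $\kappa$ many branches through $z$. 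Since $\kappa \subseteq N$, we get $E \subseteq N$, so $\pi_\alpha[E]$ consists of $\kappa$ distinct nodes of $T_\alpha$ above $z$.
\item \emph{First bullet, $A \supseteq X^\alpha_0 \setminus B$.} Since $Y \in N$ and $|Y| = \lambda > |T_\gamma|$, elementarity gives $z \in T_\gamma$ and $E \in N$ with $E \in [Y]^\kappa$ and all members of $E$ above $z$. Then $\pi_\alpha[E] \subseteq \pi_\alpha[Y \cap N] = X^\alpha_0$ has size $\kappa$ and lies above $z$.
\item \emph{First bullet, $A \supseteq T_\alpha \setminus (X^\alpha_0 \cup B)$.} Pick $z$ with $\lambda^+$ branches through it. Since $|Y| = \lambda$, there are $\lambda^+$ branches through $z$ outside $Y$, so in $N$ we find $E \in [T_\lambda \setminus Y]^\kappa$ of branches through $z$. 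By injectivity of $\pi_\alpha$ on $N \cap T_\lambda$, the set $\pi_\alpha[E]$ is disjoint from $\pi_\alpha[Y \cap N] = X^\alpha_0$ and has size $\kappa$.
\end{itemize}

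\textbf{Main obstacle.} The delicate point is the injectivity of $\pi_\alpha$ on $N \cap T_\lambda$ and the resulting identification $X^\alpha_0 = \pi_\alpha[Y \cap N]$. This is what guarantees that the $\kappa$ projections in the last case are genuinely outside $X^\alpha_0$, and not merely projections of branches outside $Y$ that happen to collide with projections of members of $Y \cap N$.

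Combining Step 1 with Step 2, the large set minus $B$ has two distinct elements of $A$ above $z$. This contradicts the injectivity of $\pi_\gamma \upharpoonright A$.
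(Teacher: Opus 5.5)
Your proof is correct, and it takes a more elementary route than the paper's. The overall strategy is the same: contradict injectivity of $\pi_\gamma\upharpoonright A$ by counting, using elementarity of $N$ and injectivity of $\pi_{\alpha\lambda}\upharpoonright N$. The difference is in how $B$ is controlled.

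In the two cases involving complements, the paper invokes Claim~\ref{claim:2} to get $\pi_\gamma\upharpoonright B$ injective. The large set would then have fibers of size at most $2$ over $T_\gamma$, and this bound is transported to $T_\lambda$ through $N$ and refuted. You replace Claim~\ref{claim:2} by the immediate observation that $D(H,\beta,\xi)$ has at most $|H|<\kappa$ elements above any node of $T_\gamma$ once $\gamma\ge\beta$. The price is that you need $\kappa$ elements of the large set over one node rather than three, and elementarity supplies that just as easily.

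The gain is clearest in the case $A\supseteq X^\alpha_0\setminus B$. There the paper uses Claim~\ref{claim:5}(1) to get $K=Y\cap\pi_{\alpha\lambda}^{-1}(B)\cap N\in N$. It needs this in order to transfer injectivity of $\pi_\beta$ from $(Y\setminus K)\cap N$ to $Y\setminus K$. Your pigeonhole argument uses only $Y\in N$, so Claim~\ref{claim:5}(1) is not needed for this claim at all; it is still needed later, in Claim~\ref{claim:12}. In exchange, the paper's version reuses claims it has already established and treats every case as a transfer of fiber bounds from $T_\alpha$ up to $T_\lambda$.

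Two harmless inaccuracies:
\begin{enumerate}
\item The appeal to Claim~\ref{claim:4} is superfluous, since every element of $V_\alpha$ is by definition of the form $D(H,\beta,\xi)$.
\item A $(\dagger)$-witness $N$ need not satisfy clause (c) of the definition of $S'$. It is only required to contain $X^\beta_0$ for $\beta<\alpha$, not the whole sequence. You never use (c), only $T,Y\in N$, ${}^{<\kappa}N\subseteq N$ and $N\cap\lambda=\alpha$, so nothing breaks.
\end{enumerate}
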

\begin{proof}
Fix $\alpha \in S'$ and first suppose that $\alpha$ does not satisfy $(\dagger)$. Let $N$ be a witness to $\alpha \in S'$. It suffices to show that $\pi_\beta \upharpoonright (T_\alpha \setminus B)$ is not injective for every $B \in V_\alpha$ and $\beta < \alpha$. Assume otherwise towards a contradiction and pick witnesses $B \in V_\alpha$ and $\beta < \alpha$.  

By Claim~\ref{claim:2}, $B$ is captured below $\alpha$. Hence, in particular, there exists some $\gamma \ge \beta$ such that $\pi_\gamma \upharpoonright B$ is injective. Since both $\pi_\gamma \upharpoonright B$ and $\pi_\gamma \upharpoonright (T_\alpha \setminus B)$ are injective, we conclude that the map $\pi_{\gamma \alpha}$ has fibers of cardinality at most $2$. Since the map $\pi_{\alpha \lambda} \upharpoonright N$ is injective by elementarity of $N$, we conclude that also the fibers of $\pi_{\gamma \lambda} \upharpoonright N$ have cardinality at most $2$. By elementarity of $N$ again, $\pi_{\gamma \lambda}$ must have fibers of cardinality at most $2$, which is absurd, since $T_\gamma$ has cardinality at most $\kappa$, while $T_\lambda$ has cardinality $\lambda^+$.

Now suppose that $\alpha$ satisfies $(\dagger)$ and let $N, Y$ be the witnesses. Fix also some $B \in V_\alpha$. It suffices to prove that, for every $\beta < \alpha$, neither $\pi_{\beta} \upharpoonright (X^\alpha_0 \setminus B)$ nor  $\pi_{\beta} \upharpoonright (T_\alpha \setminus (X^\alpha_0 \cup  B))$ is injective.

Fix $\beta < \alpha$ and suppose towards a contradiction that $\pi_{\beta} \upharpoonright (X^\alpha_0 \setminus B)$ is injective. By Claim~\ref{claim:5}(1), we know that 
\[
K \coloneqq Y \cap \pi_{\alpha \lambda}^{-1}(B) \cap N \in N.
\]
Then, note that
\begin{equation}\label{eq:7-1}
\pi_\beta \upharpoonright ((Y \setminus K) \cap N) = (\pi_\beta \upharpoonright (X^\alpha_0 \setminus B)) \circ (\pi_\alpha \upharpoonright ((Y \setminus K) \cap N)).
\end{equation}
Indeed, $(Y \setminus K) \cap N = (Y \cap N) \setminus \pi_{\alpha \lambda}^{-1}(B)$ and $\pi_\alpha[(Y \setminus K) \cap N] = \pi_\alpha[Y \cap N] \setminus B = X^\alpha_0 \setminus B$, where the last equality holds since $X^\alpha_0 = \pi_\alpha[Y \cap N]$.

Since $\pi_{\alpha \lambda} \upharpoonright N$ is injective and $\pi_{\beta} \upharpoonright (X^\alpha_0 \setminus B)$ is injective by our (absurd) assumption, we conclude by \eqref{eq:7-1} that $\pi_\beta \upharpoonright ((Y \setminus K) \cap N)$ is also injective. But by elementarity of $N$, we get that $\pi_\beta \upharpoonright (Y \setminus K)$ is injective. 

Now, by the choice of $Y$, we know that $Y$ has cardinality $\lambda$. On the other hand, we claim that $K$ has cardinality at most $\kappa$. Indeed, $B$, being a subset of $T_\alpha$, has cardinality at most $\kappa$. Therefore, since the map $\pi_{\alpha \lambda} \upharpoonright N$ is injective, also the set $\pi_{\alpha \lambda}^{-1}(B) \cap N$ has cardinality at most $\kappa$. But since $K \subseteq \pi_{\alpha \lambda}^{-1}(B) \cap N$, we conclude that $K$ has cardinality at most $\kappa$. 

Hence, $Y \setminus K$ has cardinality $\lambda$, and $\pi_\beta \upharpoonright (Y \setminus K)$ cannot be injective, since $T_\beta$ has cardinality at most $\kappa$. Contradiction.

Now we argue that $\pi_{\beta} \upharpoonright (T_\alpha \setminus (X^\alpha_0 \cup  B))$  is not injective. We omit some details since the proof is very similar to the argument given in the case in which $\alpha$ does not satisfy $(\dagger)$. Suppose towards a contradiction that $\pi_{\beta} \upharpoonright (T_\alpha \setminus (X^\alpha_0 \cup  B))$ is injective for some $\beta < \alpha$. Then, there exists $\gamma$ with $\beta \le \gamma < \alpha$ such that $\pi_\gamma \upharpoonright B$ is injective. In particular, the map $\pi_\gamma \upharpoonright (T_\alpha \setminus X^\alpha_0)$ has fibers of cardinality at most $2$. Since the map $\pi_{\alpha \lambda} \upharpoonright N$ is injective and $X^\alpha_0 = \pi_\alpha[Y \cap N]$, we conclude that the fibers of the map $\pi_\gamma \upharpoonright ((T_\lambda \setminus Y) \cap N)$ also have cardinality at most $2$. By elementarity of $N$, also the fibers of $\pi_\gamma \upharpoonright (T_\lambda \setminus Y)$ have cardinality at most $2$, which is a contradiction, since $T_\gamma$ has cardinality at most $\kappa$ while $T_\lambda \setminus Y$ has cardinality $\lambda^+$.
\end{proof}

\begin{claim}\label{claim:8}
For every $\alpha \in S'$, the following hold:
\begin{enumerate}[label={\upshape (\arabic*)}]
\item $R_\alpha = V_\alpha$,
\item $W_\alpha \setminus V_\alpha$ is upward closed, meaning that for every $A, B \in W_\alpha$ with $A \subseteq B$, if $A \not\in V_\alpha$, then $B \not\in V_\alpha$.
\end{enumerate}
\end{claim}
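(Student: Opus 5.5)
The plan is to derive both parts from Claims~\ref{claim:2}, \ref{claim:6} and \ref{claim:7}.

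For (1), note first that $V_\alpha\subseteq W_\alpha$ by definition, so Claim~\ref{claim:2} (recall $S'\subseteq S$) gives $V_\alpha\subseteq R_\alpha$. For the converse, take $A\in W_\alpha\setminus V_\alpha$ and show $A\notin R_\alpha$. By Claim~\ref{claim:6}, $A=(A_0\cap X^\alpha_0)\cup(A_1\cap(T_\alpha\setminus X^\alpha_0))$ with $A_0,A_1\in\hat V_\alpha$ and at least one $A_i\in V_\alpha^\complement$. We split according to $(\dagger)$.

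If $\alpha$ fails $(\dagger)$, then $X^\alpha_0=\emptyset$ and $A=A_1$. Since $A\notin V_\alpha$, we must have $A_1=T_\alpha\setminus B$ with $B\in V_\alpha$. The second bullet of Claim~\ref{claim:7} then gives $A\notin R_\alpha$.

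If $\alpha$ satisfies $(\dagger)$, suppose $A_0=T_\alpha\setminus B\in V_\alpha^\complement$. Then $A\supseteq X^\alpha_0\setminus B$, and the first bullet of Claim~\ref{claim:7} applies. Symmetrically, if $A_1=T_\alpha\setminus B$, then $A\supseteq T_\alpha\setminus(X^\alpha_0\cup B)$, and the first bullet applies again. Since Claim~\ref{claim:6} guarantees one of these two cases occurs whenever $A\in W_\alpha\setminus V_\alpha$, this proves $R_\alpha=V_\alpha$.

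For (2), the key observation is that the argument above proves more than $A\notin R_\alpha$. Every $A\in W_\alpha\setminus V_\alpha$ contains a ``witness set'' of one of three kinds: $T_\alpha\setminus B$ in the non-$(\dagger)$ case, or $X^\alpha_0\setminus B$ or $T_\alpha\setminus(X^\alpha_0\cup B)$ in the $(\dagger)$ case, always with $B\in V_\alpha$. Claim~\ref{claim:7} is stated for \emph{every} superset of such a witness set.

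Now let $A\subseteq B'$ with $A,B'\in W_\alpha$ and $A\notin V_\alpha$. Then $B'$ contains the same witness set as $A$, so Claim~\ref{claim:7} gives $B'\notin R_\alpha$. By (1), $B'\notin V_\alpha$.

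The main obstacle is the bookkeeping in the $(\dagger)$ case: one must check that the decomposition from Claim~\ref{claim:6} really yields a witness of the form Claim~\ref{claim:7} requires, which is why that claim is phrased with supersets. The case analysis itself is routine.
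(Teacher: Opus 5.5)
Your proposal is correct and is essentially the paper's proof. It uses the same three ingredients: Claim~\ref{claim:2} for $V_\alpha\subseteq R_\alpha$, Claim~\ref{claim:6} to extract a complement-type witness set, and Claim~\ref{claim:7} to rule out capture of any superset of that witness set, all within the same case split on $(\dagger)$. The paper merely packages this as a chain of inclusions $W_\alpha\setminus P_\alpha\subseteq V_\alpha\subseteq R_\alpha\subseteq W_\alpha\setminus Q_\alpha\subseteq W_\alpha\setminus P_\alpha$, where $P_\alpha$ is the second set in Claim~\ref{claim:6} and $Q_\alpha$ is the set of elements of $W_\alpha$ containing one of your witness sets; it then reads off $W_\alpha\setminus V_\alpha=Q_\alpha$, which is visibly upward closed, whereas you argue elementwise.
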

\begin{proof}
Fix $\alpha \in S'$. First suppose that $\alpha$ does not satisfy $(\dagger)$. Then, $X^\alpha_0 = \emptyset$, and, by Claim~\ref{claim:6}, $W_\alpha = \hat{V}_\alpha =  V_\alpha \cup V_\alpha^\complement$. Hence,
\begin{equation}\label{eq:es1}
W_\alpha \setminus V_\alpha^\complement \subseteq V_\alpha \subseteq R_\alpha \subseteq W_\alpha \setminus \{A \in W_\alpha : \exists B \in V_\alpha^\complement \ (A \supseteq B)\} \subseteq W_\alpha \setminus V_\alpha^\complement,
\end{equation}
where the first inclusion follows from $W_\alpha = V_\alpha \cup V_\alpha^\complement$; the second one follows from Claim~\ref{claim:2}, the third one holds by Claim~\ref{claim:7}, and the last one is trivial. Overall, \eqref{eq:es1} is a chain of equalities. Hence, $V_\alpha = R_\alpha$, and $W_\alpha \setminus V_\alpha = \{A \in W_\alpha : \exists B \in V_\alpha^\complement \ (A \supseteq B)\}$, which  is clearly  upward closed.

Now assume that $\alpha$ satisfies $(\dagger)$. Let
\begin{multline*}
P_\alpha \coloneqq \big\{(A_0 \cap X^\alpha_0) \cup (A_1 \cap (T_\alpha \setminus X^\alpha_0)) : A_0, A_1 \in \hat{V}_\alpha\\\text{and } \big(A_0 \in V_\alpha^\complement \text{ or } A_1 \in V_\alpha^\complement\big)\big\}
\end{multline*}
and let 
\[
Q_\alpha \coloneqq \big\{A \in W_\alpha : \exists B \in V_\alpha \ (A \supseteq X^\alpha_0 \setminus B \text{ or } A \supseteq T_\alpha \setminus (X^\alpha_0 \cup B))\big\}.
\]
Then, the following holds:
\begin{equation}\label{eq:es2}
W_\alpha \setminus P_\alpha \subseteq V_\alpha \subseteq R_\alpha \subseteq W_\alpha \setminus Q_\alpha \subseteq W_\alpha \setminus P_\alpha,
\end{equation}
where the first inclusion follows from Claim~\ref{claim:6}, the second one from Claim~\ref{claim:2}, the third one from Claim~\ref{claim:7}, and the last one holds since $P_\alpha \subseteq Q_\alpha$. So \eqref{eq:es2} is a chain of equalities. It follows that $R_\alpha = V_\alpha$. Moreover, we also have $W_\alpha \setminus V_\alpha = Q_\alpha$. Since $Q_\alpha$ is clearly upward closed, (2) follows as well.
\end{proof}

\begin{claim}\label{claim:9}
For every $\alpha \in S'$, $R_\alpha$ is closed under ${<}\kappa$-unions and under taking subsets that belong to $W_\alpha$.
\end{claim}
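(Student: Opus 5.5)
By Claim~\ref{claim:8}(1), for $\alpha \in S'$ we have $R_\alpha = V_\alpha$. The plan is therefore to prove the two closure properties for $V_\alpha$ instead of $R_\alpha$. Both will come from facts already established: Claim~\ref{claim:4} for unions and Claim~\ref{claim:8}(2) for subsets.

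For ${<}\kappa$-unions, fix $\eta < \kappa$ and a family $\langle A_\iota : \iota < \eta\rangle$ of elements of $R_\alpha = V_\alpha$. Each $A_\iota$ lies in $V_\alpha$, so it has the form $D(H_\iota, \beta_\iota, \xi_\iota)$ for some $H_\iota \in [T_\alpha]^{<\kappa}$ and $\beta_\iota, \xi_\iota < \alpha$. Claim~\ref{claim:4} then gives $\bigcup_{\iota<\eta} A_\iota \in V_\alpha = R_\alpha$.

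For subsets, let $A \in R_\alpha$ and $B \in W_\alpha$ with $B \subseteq A$. Suppose towards a contradiction that $B \notin V_\alpha$. Since $B \subseteq A$ and $A \in W_\alpha$, the upward closure of $W_\alpha \setminus V_\alpha$ from Claim~\ref{claim:8}(2) forces $A \notin V_\alpha$. This contradicts $A \in R_\alpha = V_\alpha$. Hence $B \in V_\alpha = R_\alpha$.

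There is no real obstacle left at this stage, since the substantive work is contained in Claims~\ref{claim:4}--\ref{claim:8}. The only point to watch is that both claims are used only for $\alpha \in S'$, which is exactly the range over which clause (3) of $W^*_\lambda(\lambda^+)$ must be verified, because $S'$ is the stationary set in our witness.
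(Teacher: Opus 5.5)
Your proposal is correct and matches the paper's proof exactly: both use Claim~\ref{claim:8}(1) to identify $R_\alpha$ with $V_\alpha$. Unions then follow from Claim~\ref{claim:4}, and closure under subsets in $W_\alpha$ from the upward closure of $W_\alpha \setminus V_\alpha$ in Claim~\ref{claim:8}(2).
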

\begin{proof}
Let $\alpha \in S'$.  The closure of $R_\alpha$ under ${<}\kappa$-unions directly follows from  Claim~\ref{claim:4} and Claim~\ref{claim:8}(1). The closure of $R_\alpha$ under taking subsets in $W_\alpha$ directly follows from Claim~\ref{claim:8}(1) and (2).
\end{proof}

By Claim~\ref{claim:9}, we conclude that clause (3) of the principle $W^*_\lambda(\lambda^+)$ is satisfied. We are left to prove the fourth clause.  Fix $f:\lambda \rightarrow ([T_{\lambda}]^{\le\kappa})^2$ such that 
\[
Y \coloneqq \bigcup_{\eta < \lambda} (f_0(\eta) \cup f_1(\eta))
\] 
has cardinality $\lambda$. Let $\langle b_\mu : \mu < \lambda\rangle$ enumerate $Y$ without repetitions. Define
\begin{align*}
\Omega^0 &\coloneqq \{(\mu, x) : \mu < \lambda, x \in T \text{ and } x < b_\mu\},\\
\Omega^1 &\coloneqq \{(\eta, \mu, i) : \eta,\mu < \lambda \text{ and } i \in \{0,1\} \text{ and } b_\mu \in f_i(\eta)\},\\
\Omega &\coloneqq \Omega^0 \cup \Omega^1.
\end{align*}

Note that $\Omega \subseteq H(\lambda)$. By the choice of the sequence $\langle \Omega_\alpha : \alpha < \lambda\rangle$, there are stationarily many $\alpha \in S$ for which there exists an elementary submodel $N \prec H(\lambda^{++})$ satisfying the following conditions: 
\begin{enumerate}[label=(\alph*)]
\item ${}^{<\kappa}N \subseteq N$,
\item $\alpha = \lambda \cap N$,
\item $T, <, \lhd, Q, \langle X^\beta_\xi : \beta, \xi < \lambda\rangle, f, \langle b_\mu : \mu < \lambda\rangle\in N$,
\item $\Omega_\alpha = \Omega \cap N$.
\end{enumerate} 
For the remainder of the proof, we fix one such $\alpha$ and $N$. Note that $\alpha \in S'$. Moreover, since $\Omega^0, \Omega^1$, and $\Omega$ are definable from parameters belonging to $N$, elementarity yields that all three of them belong to $N$.
\pagebreak
\begin{claim}\label{claim:10}
$\alpha$ satisfies $(\dagger)$ and $x^\alpha_\mu = \pi_\alpha(b_\mu)$ for every $\mu < \alpha$. In particular, $X^\alpha_0 = \pi_\alpha[Y \cap N]$.
\end{claim}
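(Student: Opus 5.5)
First I will show that $x^\alpha_\mu = \pi_\alpha(b_\mu)$ for every $\mu < \alpha$. Since $\Omega_\alpha = \Omega \cap N$ and $\mu \in N$, the definition of $\Omega^0$ gives
\[
\{y \in T \upharpoonright \alpha : (\mu, y) \in \Omega_\alpha\} = \{y \in T\upharpoonright\alpha : y < b_\mu\} \cap N .
\]
As noted after the definition of $S'$, we have $T\upharpoonright\alpha = (T\upharpoonright\lambda)\cap N$, so this set equals $\{y : y < b_\mu,\ \mathrm{ht}(y) < \alpha\}$. That is exactly the set of predecessors of $\pi_\alpha(b_\mu)$, and it is certainly $<$-cofinal below $\pi_\alpha(b_\mu)$. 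The node $\pi_\alpha(b_\mu)$ therefore witnesses the defining condition for $x^\alpha_\mu$. By the uniqueness argument ((Q4) applied at $\alpha\in S$, so that nodes of height $\alpha$ are determined by their predecessors), $x^\alpha_\mu = \pi_\alpha(b_\mu)$. We also need $b_\mu \in N$ for $\mu<\alpha$, which holds because $\langle b_\mu\rangle \in N$ and $\mu \in N$. Conversely, any $b \in Y\cap N$ has its index $\mu$ in $N\cap\lambda=\alpha$. Hence
\[
\{x^\alpha_\mu : \mu<\alpha\} = \pi_\alpha[\{b_\mu:\mu<\alpha\}] = \pi_\alpha[Y\cap N].
\]

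Next I will verify $(\dagger)$ with the same $N$ and $Y$. Clauses (1) and (2) are (a) and (b). Clause (5) holds because $Y$ is definable from $f \in N$. Clause (6) is the computation in the previous paragraph. For clause (3), the sequence $\langle X^\beta_\xi : \beta<\lambda,\ 0<\xi<\lambda\rangle$ is definable from $\langle X^\beta_\xi : \beta,\xi<\lambda\rangle \in N$. The only delicate point is clause (4), namely that $X^\beta_0 \in N$ for $\beta<\alpha$. Here I would use (c): the full sequence $\langle X^\beta_\gamma : \beta,\gamma<\lambda\rangle$, including the $\gamma = 0$ entries, lies in $N$. Each $\beta<\alpha$ is in $N$, so each $X^\beta_0 \in N$ by elementarity.

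If one worries that (c) refers to a sequence whose $0$-th entries are defined only in the course of the induction, note that the sequence is fully defined before $\Omega$ is chosen, so (c) is a legitimate requirement. Since $(\dagger)$ holds and $\alpha\in S$, the definition gives $X^\alpha_0 = \{x^\alpha_\mu:\mu<\alpha\} = \pi_\alpha[Y\cap N]$.

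The step I expect to be the main obstacle is the first paragraph. It requires checking carefully that $T\upharpoonright\alpha\subseteq N$, which uses ${}^{<\kappa}N\subseteq N$, $\kappa\subseteq N$, $|T_\beta|\le\kappa$ and $T\upharpoonright\lambda\subseteq\lambda$. It also requires checking that the uniqueness clause in the definition of $x^\alpha_\mu$ applies, so that the decoded node is really $\pi_\alpha(b_\mu)$ and not an arbitrary choice.
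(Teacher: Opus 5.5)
Your proposal is correct and follows the paper's proof essentially step for step. You decode $x^\alpha_\mu=\pi_\alpha(b_\mu)$ from $\Omega_\alpha=\Omega\cap N$, using $T\upharpoonright\alpha\subseteq N$ and the (Q4)-uniqueness at $\alpha\in S$, and you obtain clause (6) of $(\dagger)$ from $\{b_\mu:\mu<\alpha\}=Y\cap N$; the only difference is that you also check the remaining clauses of $(\dagger)$ explicitly (in particular clause (4), via the full sequence in (c)), which the paper simply calls clear.
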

\begin{proof}
Let us show that $N$ and $Y$ witness that $\alpha$ satisfies $(\dagger)$. All conditions of $(\dagger)$ are clearly met except for (6). So we have to prove that $\{x^\alpha_\mu : \mu < \alpha\} = \pi_\alpha[Y \cap N]$.

By $\Omega^0 \in N$ and by elementarity of $N$,
\[
\Omega^0 \cap N = \{(\mu, x) : \mu < \alpha, x \in T \upharpoonright \alpha \text{ and } x < b_\mu\}.
\]
Since $\Omega_\alpha = \Omega \cap N$, it follows that, for every $\mu < \alpha$,
\[
\{y \in T \upharpoonright \alpha : (\mu, y) \in \Omega_\alpha\} = \{y : y < \pi_\alpha(b_\mu)\}.
\]
Hence, by the definition of $x^\alpha_\mu$, we have $x^\alpha_\mu = \pi_\alpha(b_\mu)$ for every $\mu < \alpha$. So 
\begin{equation}\label{eq:dagger}
\{x^\alpha_\mu : \mu < \alpha\} = \{\pi_\alpha(b_\mu) : \mu < \alpha\} = \pi_\alpha[Y \cap N],
\end{equation}
where the last equality directly follows from $\{b_\mu : \mu < \alpha\} = Y \cap N$.  Thus, clause (6) of $(\dagger)$ is satisfied and, moreover, $x^\alpha_\mu = \pi_\alpha(b_\mu)$ for every $\mu < \alpha$.
\end{proof}

\begin{claim}\label{claim:11}
$\{b_\mu : \mu < \alpha\}$ is captured at $\alpha$. 
\end{claim}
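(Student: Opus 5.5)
We must show that for every $\beta$ with $\beta_0\le\beta<\alpha$ (for some $\beta_0<\alpha$), $\pi_\beta[\{b_\mu:\mu<\alpha\}]\in W_\beta$ and $\pi_\beta$ restricted to $\{b_\mu:\mu<\alpha\}$ is injective. (Here the set lives at level $\lambda$, so ``captured at $\alpha$'' concerns all levels $\beta\ge\alpha$ below $\lambda$.) We have $\{b_\mu:\mu<\alpha\}=Y\cap N$. The plan is to reuse the argument of Claim~\ref{claim:1}, with $\alpha$ itself as the capturing level, and to feed in Claim~\ref{claim:10} together with the special set $X^\alpha_0$.

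First I will fix an $\lhd$-upper bound $\bar b\in N$ of $Y$. By Lemma~\ref{lemma:basicquag}(1) such a bound exists, and by elementarity one lies in $N$, since $Y\in N$. For each $b\in Y\cap N$, (Q4) applied in $N$ gives some $\gamma_b\in N\cap\lambda=\alpha$ with $b=Q(\pi_{\gamma_b}(b),\bar b)$ and $\pi_{\gamma_b}(b)\lhd\pi_{\gamma_b}(\bar b)$. By Lemma~\ref{lemma:basicquag}(4), this gives $b=Q(\pi_\alpha(b),\bar b)$ for all $b\in Y\cap N$.

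Next, fix $\beta$ with $\alpha\le\beta<\lambda$.
\begin{itemize}
\item \emph{Injectivity.} If $b,b'\in Y\cap N$ and $\pi_\beta(b)=\pi_\beta(b')$, then $\pi_\alpha(b)=\pi_\alpha(b')$. Hence $b=Q(\pi_\alpha(b),\bar b)=Q(\pi_\alpha(b'),\bar b)=b'$. Alternatively one can use that $\pi_{\alpha\lambda}\upharpoonright N$ is injective.
\item \emph{The case $\beta=\alpha$.} Here $\pi_\alpha[Y\cap N]=X^\alpha_0\in W_\alpha$ by Claim~\ref{claim:10}.
\item \emph{The case $\beta>\alpha$.} I want $\pi_\beta[Y\cap N]=D(\pi_\beta(\bar b),\alpha,0)$, which lies in $V_\beta\subseteq W_\beta$ since $0,\alpha<\beta$.
\end{itemize}

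To verify the identity for $\beta>\alpha$, argue exactly as in \eqref{eq:start}. For $b\in Y\cap N$, Lemma~\ref{lemma:basicquag}(3) gives $\pi_\beta(b)=\pi_\beta(Q(\pi_\alpha(b),\bar b))=Q(\pi_\alpha(b),\pi_\beta(\bar b))$. Since $\pi_\alpha(b)\in X^\alpha_0$ and $\pi_\alpha(b)\trianglelefteq\pi_\alpha(\bar b)$, the point $\pi_\beta(b)$ lies in the $D$-set. Conversely, take $x\in D(\pi_\beta(\bar b),\alpha,0)$. Then $\pi_\alpha(x)\in X^\alpha_0$, so $\pi_\alpha(x)=\pi_\alpha(b)$ for some $b\in Y\cap N$, and $x=Q(\pi_\alpha(b),\pi_\beta(\bar b))=\pi_\beta(b)$.

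The only delicate point is the $\trianglelefteq$ condition $\pi_\alpha(b)\trianglelefteq\pi_\alpha(\bar b)$. It follows from $\pi_{\gamma_b}(b)\lhd\pi_{\gamma_b}(\bar b)$ together with $b=Q(\pi_\alpha(b),\bar b)$ and (Q1), which gives $Q(\pi_\alpha(b),\bar b)\lhd\bar b$ only once the domain condition holds. So I would instead derive it via Lemma~\ref{lemma:basicquag}(3),(2) at level $\alpha$, as in Claim~\ref{claim:5-1}.

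I expect the main obstacle to be reading ``captured at $\alpha$'' correctly (levels $\beta\ge\alpha$), and realizing that level $\alpha$ itself is handled precisely by the reserved index $X^\alpha_0$. That is what Claim~\ref{claim:10} supplies.
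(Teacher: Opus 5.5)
Your proposal is correct and follows essentially the same route as the paper: you take an $\lhd$-upper bound $\bar b\in N$ of $Y$, get $b=Q(\pi_\alpha(b),\bar b)$ for $b\in Y\cap N$ via (Q4), elementarity and Lemma~\ref{lemma:basicquag}(4), use $X^\alpha_0=\pi_\alpha[Y\cap N]\in W_\alpha$ from Claim~\ref{claim:10} at level $\alpha$, and prove $\pi_\beta[Y\cap N]=D(\pi_\beta(\bar b),\alpha,0)$ for $\alpha<\beta<\lambda$; for injectivity the paper uses $\pi_{\alpha\lambda}\upharpoonright N$, which is your alternative. Your opening sentence misstates the range of levels (it should be $\alpha\le\beta<\lambda$, as your parenthetical says), and the ``delicate'' condition $\pi_\alpha(b)\trianglelefteq\pi_\alpha(\bar b)$ is immediate: Lemma~\ref{lemma:basicquag}(3) with $\gamma_b<\alpha$ gives $\pi_\alpha(b)=Q(\pi_{\gamma_b}(b),\pi_\alpha(\bar b))$, and (Q1) then gives $\pi_\alpha(b)\lhd\pi_\alpha(\bar b)$.
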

\begin{proof}
By Claim~\ref{claim:10}, $\alpha$ satisfies $(\dagger)$ and 
\[
X^\alpha_0 = \{\pi_\alpha(b_\mu) : \mu < \alpha\} = \pi_\alpha[Y \cap N] \in W_\alpha.
\] 
Moreover, since the map $\pi_{\alpha \lambda} \upharpoonright  N$ is injective, the maps $\pi_\beta \upharpoonright (Y \cap N)$ are also injective for every $\beta$ with $\alpha \le \beta < \lambda$. Hence, since we already noted that $X^\alpha_0 \in W_\alpha$,  it suffices to show that $\pi_\beta[Y \cap N] \in V_\beta$ for every $\beta$ with $\alpha < \beta < \lambda$. Pick such a $\beta$.

Let $\bar{b}$ be an $\lhd$-upper bound of $Y$ in $N$. We claim that 
\[
\pi_\beta[Y \cap N] = D(\pi_\beta(\bar{b}), \alpha, 0).
\]
 For every $b \in Y \cap N$,
\begin{equation}\label{eq:capture}
\begin{split}
\pi_\beta(b) &= \pi_\beta(Q(\pi_\alpha(b), \bar{b}))\\&= Q(\pi_\alpha(b), \pi_\beta(\bar{b})).
\end{split}
\end{equation}
The first equality holds because, by elementarity of $N$ and $b \lhd \bar{b}$ and (Q4), there exists $\gamma < \alpha$ such that $b = Q(\pi_\gamma(b), \bar{b})$; then, by Lemma~\ref{lemma:basicquag}(4), we conclude $b = Q(\pi_\alpha(b), \bar{b})$. The second equality follows from Lemma~\ref{lemma:basicquag}(3). 

Now pick $z \in \pi_\beta[Y \cap N]$. Let $b \in Y \cap N$ be such that $z = \pi_\beta(b)$. By \eqref{eq:capture}, $z = Q(\pi_\alpha(b), \pi_\beta(\bar{b}))$. As $\pi_\alpha(b) \in X^\alpha_0$, we conclude that $z \in D(\pi_\beta(\bar{b}), \alpha, 0)$.

Conversely, suppose $z \in D(\pi_\beta(\bar{b}), \alpha, 0)$. This means that $\pi_\alpha(z) \trianglelefteq \pi_\alpha(\bar{b})$, and $\pi_\alpha(z) \in X^\alpha_0$, and $z = Q(\pi_\alpha(z), \pi_\beta(\bar{b}))$. As $X^\alpha_0 = \pi_\alpha[Y \cap N]$ by Claim~\ref{claim:10}, we can pick $b \in Y \cap N$ such that $\pi_\alpha(z) = \pi_\alpha(b)$. Then, $z = Q(\pi_\alpha(b), \pi_\beta(\bar{b}))$. From \eqref{eq:capture}, we conclude $z = \pi_\beta(b)$ and hence $z \in \pi_\beta[Y \cap N]$. 

Overall, we have shown $\pi_\beta[Y \cap N] = D(\pi_\beta(\bar{b}), \alpha, 0) \in V_\beta$, as desired.
\end{proof}

\begin{claim}\label{claim:12}
Given $A \in R_\alpha$, if $A \subseteq \{\pi_\alpha(b_\mu) : \mu < \alpha\}$, then 
\[
\sup \{\mu < \alpha : \pi_\alpha(b_\mu) \in A\} < \alpha.
\]
\end{claim}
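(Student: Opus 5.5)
The plan is to convert the capturing hypothesis on $A$ into membership in $V_\alpha$. Then Claim~\ref{claim:5}(1) pulls the corresponding set of branches into the model $N$, and the bound follows from elementarity. Throughout, $\alpha$ and $N$ are the ones fixed above, so $\alpha \in S'$. By Claim~\ref{claim:10}, $\alpha$ satisfies $(\dagger)$ with witnesses $N, Y$, and $X^\alpha_0 = \pi_\alpha[Y \cap N] = \{\pi_\alpha(b_\mu) : \mu < \alpha\}$.

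First, since $A \in R_\alpha$ and $\alpha \in S'$, Claim~\ref{claim:8}(1) gives $A \in V_\alpha$. Applying Claim~\ref{claim:5}(1) to $A$ and the witnesses $N, Y$, we get
\[
K \coloneqq Y \cap \pi_{\alpha\lambda}^{-1}(A) \cap N \in N .
\]
Next, I would bound the size of $K$. We have $A \subseteq T_\alpha$, so $|A| \le \kappa$. Moreover, $\pi_{\alpha\lambda} \upharpoonright N$ is injective, as noted earlier by elementarity. Hence $|K| \le \kappa$, exactly as in the proof of Claim~\ref{claim:7}.

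Now consider the index set $M \coloneqq \{\mu < \lambda : b_\mu \in K\}$. It is definable from $K$ and $\langle b_\mu : \mu < \lambda\rangle$, both of which lie in $N$, so $M \in N$. Since $|M| = |K| \le \kappa < \lambda$ and $\lambda$ is regular, $\sup M < \lambda$, and by elementarity $\sup M \in N \cap \lambda = \alpha$.

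It remains to check that $\{\mu < \alpha : \pi_\alpha(b_\mu) \in A\} \subseteq M$, which gives the desired bound. Take $\mu < \alpha$. Then $\mu \in N$, so $b_\mu \in Y \cap N$. Therefore, if $\pi_\alpha(b_\mu) \in A$, then $b_\mu \in K$ directly from the definition of $K$, and so $\mu \in M$. (The reverse inclusion $M \cap \alpha \subseteq \{\mu<\alpha : \pi_\alpha(b_\mu)\in A\}$ is also clear, though it is not needed.) Hence $\sup\{\mu<\alpha : \pi_\alpha(b_\mu) \in A\} \le \sup M < \alpha$.

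There is no real obstacle here: the substantive work was already done in Claims~\ref{claim:5} and \ref{claim:8}. The only points needing care are that Claim~\ref{claim:5} is invoked with the specific witnesses $N, Y$ supplied by Claim~\ref{claim:10}, and that $|K| \le \kappa$ uses the injectivity of $\pi_{\alpha\lambda}$ on $N$.
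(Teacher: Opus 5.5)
Your proposal is correct and follows essentially the same route as the paper. You use Claim~\ref{claim:8}(1) to get $A \in V_\alpha$ and Claim~\ref{claim:5}(1) (with the witnesses $N, Y$ from Claim~\ref{claim:10}) to get $K \in N$, then the bound $|K| \le \kappa$ via injectivity of $\pi_{\alpha\lambda}\upharpoonright N$, and elementarity to push $\sup\{\mu<\lambda : b_\mu \in K\}$ below $\alpha$; the only cosmetic difference is that you need just the inclusion $\{\mu<\alpha : \pi_\alpha(b_\mu)\in A\} \subseteq M$, whereas the paper records the equality.
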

\begin{proof}
Fix $A \in R_\alpha$ with $A \subseteq \{\pi_\alpha(b_\mu) : \mu < \alpha\}$. By Claim~\ref{claim:8}(1), $A \in V_\alpha$.  By Claim~\ref{claim:5}(1), we know that 
\[
K \coloneqq Y \cap \pi_{\alpha \lambda}^{-1}(A) \cap N \in N.
\]
Then,
\begin{equation}\label{eq:13}
\{\mu < \alpha : \pi_\alpha(b_\mu) \in A\} = \{\mu < \lambda : b_\mu \in K\} \in N,
\end{equation}
where the equality follows from the definition of $K$ together with Claim~\ref{claim:10}, and the final membership in $N$ holds because both the enumeration of $Y$ and $K$ belong to $N$. 

Now, as we already noted in the proof of Claim~\ref{claim:7}, the set $K$ has cardinality at most $\kappa$, since $A$ has cardinality at most $\kappa$ and $\pi_{\alpha \lambda} \upharpoonright N$ is injective. Hence, $ \sup \{\mu < \lambda : b_\mu \in K\} < \lambda$. By elementarity of $N$, $ \sup \{\mu < \lambda : b_\mu \in K\} < \alpha$. This inequality together with \eqref{eq:13} proves our claim.
\end{proof}

\begin{claim}\label{claim:13}
$A_\alpha = \{(\pi_\alpha[f_0(\beta)], \pi_\alpha[f_1(\beta)]) : \beta < \alpha\}$.
\end{claim}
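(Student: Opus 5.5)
We prove both inclusions by unwinding the definition of $A_\alpha$ with $\Omega_\alpha = \Omega \cap N$ and $x^\alpha_\gamma = \pi_\alpha(b_\gamma)$ (Claim~\ref{claim:10}). Since $\alpha \in S$, $A_\alpha$ is the set of pairs $(u_0,u_1)\in W_\alpha^2$ for which some $\beta<\alpha$ satisfies $u_i = \{x^\alpha_\gamma : (\beta,\gamma,i)\in\Omega_\alpha\}$ for $i=0,1$.

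First fix $\beta<\alpha$ and $i\in\{0,1\}$. We compute $\{x^\alpha_\gamma : (\beta,\gamma,i)\in\Omega_\alpha\}$. Elements of $\Omega^0$ have length $2$, so the triples in $\Omega_\alpha$ are exactly the elements of $\Omega^1\cap N$. Since $\alpha = \lambda\cap N$, these are the triples $(\beta,\gamma,i)$ with $\beta,\gamma<\alpha$ and $b_\gamma\in f_i(\beta)$. Hence the set equals $\pi_\alpha[\{b_\gamma : \gamma<\alpha,\ b_\gamma\in f_i(\beta)\}] = \pi_\alpha[f_i(\beta)\cap N]$.

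Next we show $f_i(\beta)\subseteq N$. Indeed $f,\beta\in N$, so $f_i(\beta)\in N$. It has cardinality at most $\kappa$, and $\kappa\subseteq N$ with $\kappa\in N$, so an enumeration of it of length $\le\kappa$ lies in $N$ and all its values lie in $N$. Therefore the set computed above is exactly $\pi_\alpha[f_i(\beta)]$. This shows that every pair in $A_\alpha$ has the form $(\pi_\alpha[f_0(\beta)],\pi_\alpha[f_1(\beta)])$ for some $\beta<\alpha$, which gives $\subseteq$.

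For $\supseteq$, the same computation shows that each pair $(\pi_\alpha[f_0(\beta)],\pi_\alpha[f_1(\beta)])$ with $\beta<\alpha$ satisfies the defining equation with witness $\beta$. It remains to check that these sets belong to $W_\alpha$, and this is the one nontrivial point. Put $Z = f_i(\beta)$. Then $Z\in N$, $Z\subseteq Y\cap N$, and $\pi_\alpha[Z]\subseteq X^\alpha_0$.

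We would show $\pi_\alpha[Z]\in V_\alpha$ along the lines of Claim~\ref{claim:1}, relativized to $N$. By Lemma~\ref{lemma:basicquag}(1) and elementarity, pick $\bar b\in N$ that is an $\lhd$-upper bound of $Z$. By (Q4) and elementarity, each $b\in Z$ has some $\gamma_b<\alpha$ with $b=Q(\pi_{\gamma_b}(b),\bar b)$. As $|Z|\le\kappa$ and the map $b\mapsto\gamma_b$ can be chosen in $N$, its range is a subset of $\lambda$ of size $\le\kappa$ lying in $N$, so $\gamma=\sup_b\gamma_b<\alpha$.

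Now $\pi_\gamma[Z]\in N$, so $\pi_\gamma[Z]=X^\gamma_\xi$ for some $0<\xi<\alpha$. The computation in Claim~\ref{claim:1}, using Lemma~\ref{lemma:basicquag}(3),(4), then gives
\[
\pi_\alpha[Z]=D(\pi_\alpha(\bar b),\gamma,\xi)\in V_\alpha\subseteq W_\alpha .
\]
Applying this for $i=0,1$ completes the proof.
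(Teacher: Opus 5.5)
Your proof is correct and follows the paper's argument: the same computation of $\Omega^1\cap N$, combined with Claim~\ref{claim:10} and $f_i(\beta)\subseteq N$, identifies the decoded sets as $\pi_\alpha[f_i(\beta)]$, which leaves only $\pi_\alpha[f_i(\beta)]\in W_\alpha$ to check. For that step the paper applies Claim~\ref{claim:1} inside $N$: the threshold for $f_i(\beta)\in N$ lies in $N$, hence below $\alpha$, using $\langle W_\beta:\beta<\lambda\rangle\in N$. You instead re-run the proof of Claim~\ref{claim:1} relative to $N$, which is equally valid, gives the slightly sharper $\pi_\alpha[f_i(\beta)]\in V_\alpha$, and does not need $\langle W_\beta:\beta<\lambda\rangle\in N$.
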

\begin{proof}
Since $f$ belongs to $N$, and $\kappa \cup \{\kappa\} \subseteq N$, we have
\[
f_i(\eta) \subseteq \{b_\mu : \mu < \alpha\} = Y \cap N
\]
 for every $i\in\{0,1\}$ and $\eta < \alpha$. Moreover, since $\Omega^1 \in N$, the elementarity of $N$ yields
\[
\Omega^1 \cap N = \{(\eta, \mu, i) : \eta, \mu < \alpha \text{ and } i\in \{0,1\} \text{ and } b_\mu \in f_i(\eta)  \}.
\]
From $\Omega_\alpha = \Omega \cap N$ and Claim~\ref{claim:10}, we have that, for every $\eta < \alpha$ and $i \in \{0,1\}$, 
\[
\{x^\alpha_\mu : (\eta, \mu, i) \in \Omega_\alpha\} = \pi_\alpha[f_i(\eta)].
\]
So in order to prove our claim it suffices to show that $\pi_\alpha[f_i(\beta)] \in W_\alpha$ for every $\beta < \alpha$ and $i = 0,1$. But by Claim~\ref{claim:1} and the elementarity of $N$, we know that for every $X \in [T_{\lambda}]^{\le\kappa} \cap N$ there exists $\gamma < \alpha$ such that $\pi_\delta[X] \in W_\delta$ for every $\delta$ with $\gamma \le \delta < \lambda$---note that the sequence $\langle W_\beta : \beta < \lambda\rangle$ belongs to $N$, being definable from parameters in $N$.  Hence, since $f_i(\beta) \in [T_{\lambda}]^{\le\kappa} \cap N$, we conclude that $\pi_\alpha[f_i(\beta)] \in W_\alpha$, as desired.
\end{proof}

Claims~\ref{claim:11}, \ref{claim:12}, and \ref{claim:13} establish clauses (4a), (4b), and (4c), respectively, for every \(\alpha\) in the stationary subset of $S'$ considered above. Hence clause (4) of $W^*_\lambda(\lambda^+)$ holds, completing the proof.
\end{proof}
\endgroup
\section{Questions}\label{sec:questions}

Under $\mathsf{V=L}$, for every uncountable regular cardinal $\kappa$, there exists a $\kappa$-Suslin tree if and only if $\kappa$ is not weakly compact \cites{MR309729, MR3692231}. The natural question is whether we can recover the same characterization in terms of Suslin $(\kappa, \kappa^+)$-forests instead. Since every Suslin $(\kappa, \kappa^+)$-forest naturally gives rise to $\kappa$-Suslin trees (see the introduction), the question is really the following:
\begin{question}
Under $\mathsf{V=L}$, is there a Suslin $(\kappa, \kappa^+)$-forest for every uncountable regular cardinal $\kappa$ which is not weakly compact?
\end{question}

We can strengthen the previous question by asking the forests to satisfy the following strong form of coherence. A $(\kappa, \lambda)$-forest $F$ is \emph{$\omega$-coherent} if for every  $f,g \in F$, the set $\{\alpha \in \mathrm{dom}(f) \cap \mathrm{dom}(g) : f(\alpha) \neq g(\alpha)\}$ is finite \cite{MR3326048}. It is known that, under $\mathsf{V=L}$, there exists a binary $\omega$-coherent $\kappa$-Suslin tree\footnote{In the case of trees, finite-difference coherence is usually referred to simply as coherence.} for every uncountable regular non-weakly compact cardinal $\kappa$  \cite{MR3692231}.

\begin{question}
Under $\mathsf{V=L}$, is there an $\omega$-coherent Suslin $(\kappa, \kappa^+)$-forest for every uncountable regular cardinal $\kappa$ which is not weakly compact?
\end{question}

A separate problem is to recover a proof of Laver's result that the existence of a Suslin $(\omega_1, \omega_2)$-forest follows from Silver's principle $W_{\omega_1}(\omega_2)$ together with $\diamondsuit$. We expect a construction using only $\diamondsuit + W_{\omega_1}(\omega_2)$ not to produce the coherence obtained in Eskew's construction.

\begin{problem}
Prove Laver's result that the existence of a Suslin $(\omega_1, \omega_2)$-forest follows from $\diamondsuit + W_{\omega_1}(\omega_2)$.
\end{problem}

Finally, we do not know whether the existence of a $(\omega_1, \mathrm{Lim}(\omega_1))$-quagmire is genuinely weaker than the existence of a $(\omega_1, 1)$-morass. Note that Komj\'{a}th \cite{MR877858} constructed, assuming the existence of a Mahlo cardinal, a model of $\mathsf{ZFC}$ in which there is an $\omega_1$-quagmire (i.e., a $(\omega_1, \emptyset)$-quagmire in our notation) but no $(\omega_1, 1)$-morass. 
\begin{question}
Is it consistent that there is a $(\omega_1, \mathrm{Lim}(\omega_1))$-quagmire but no $(\omega_1, 1)$-morass?
\end{question}

\printbibliography

\end{document}